\definecolor{darkgreen}{rgb}{0, 0.3, 0}
\newtheorem{theorem}{Theorem}[section]
\newtheorem{lemma}{Lemma}[section]
\newtheorem{proposition}[theorem]{Proposition}
\newtheorem{remark}{Remark}
\theoremstyle{plain}
\newtheorem*{theorem*}{Theorem}
\newtheorem*{conjecture*}{Conjecture}
\newtheorem*{lemma*}{Lemma}
\title{On the trace formula \\
for higher-order ODO}
\author{
E.D. Galkovskii\footnote{St. Petersburg State University; e-mail: egor\_maths@list.ru
}\setcounter{footnote}{6}
\ and
A.I. Nazarov\footnote{St. Petersburg Department of Steklov Mathematical Institute of
Russian Academy of Science
and St. Petersburg State University; e-mail: al.il.nazarov@gmail.com
}
}
\begin{document}

\maketitle

\begin{abstract}
A first order trace formula is obtained for a higher-order differential operator on a segment in the case where the perturbation is an operator of multiplication by a finite complex-valued measure. For the operators of even order $n\ge4$ a new term in the final formula is discovered.
\end{abstract}

\medskip

\date{}


\section*{Introduction}

We consider an operator $\mathbb L$ on a segment $[a,b]$ that is generated by a differential expression of order $n \geq 2$
\begin{align}\label{Diff_Equation_P}
{\cal L}:=(-i)^nD^n+\sum \limits_{k=0}^{n-2}{p_k(x)D^k},
\end{align}
(here $p_k\in L_1(a,b)$ are complex-valued functions) and by boundary conditions
\begin{align}\label{Boundary_Conditions}
    (P_j(D)y)(a)+(Q_j(D)y)(b) = 0,\qquad j=0, \dots, n-1.
\end{align}
Here $P_j$ and $Q_j$ are polynomials of degrees less than $n$ with complex coefficients. Denote by $d_j$ the maximum of degrees of $P_j$ and $Q_j$,
and by $a_j$ and $b_j$ the $d_j$-th coefficients of $P_j$ and $Q_j$ respectively (therefore, $a_j$, $b_j$ cannot be zeros simultaneously).

We assume that the system of boundary conditions (\ref{Boundary_Conditions}) is normalized, i.e. $\varkappa:=\sum\limits_{j=0}^{n-1}d_j$ is minimal among all the systems of boundary condition
that can be obtained from (\ref{Boundary_Conditions}) by linear bijective transformations. See \cite[ch. II, $\mathsection 4$]{Naimark} for a detailed explanation and \cite{Shk}
for a more advance treatment.

We also assume the boundary conditions (\ref{Boundary_Conditions}) to be Birkhoff regular, see \cite[ch. II, $\mathsection 4$]{Naimark}. Then the operator $\mathbb{L}$ has purely discrete
spectrum,\footnote{We underline that we do not require $\mathbb{L}$ to be self-adjoint.} which we denote by $\{\lambda_N\}_{_{N=1}}^{^\infty}$. In what follows we always enumerate the eigenvalues
in ascending order of their absolute values according to their multiplicities (that means $|\lambda_N| \leq |\lambda_{N+1}|$).

Let $\mathfrak M[a,b]$ be the space of finite complex-valued measures. Denote by $\mathbb{Q}$ the operator of multiplication by ${\mathfrak q} \in \mathfrak M[a,b]$. Then the operator
$\mathbb{L}_{\mathfrak q}=\mathbb{L}+\mathbb{Q}$ has also a purely discrete spectrum denoted by $\{\lambda_N({\mathfrak q})\}_{_{N=1}}^{^\infty}$.

    We are interested in the regularized trace
\begin{align*}
    \mathcal{S}({\mathfrak q}) := \sum_{N=1}^{\infty} \bigg[\lambda_N({\mathfrak q})-\lambda_N-\frac{1}{b-a}\int\limits_{[a,b]} {\mathfrak q}(dx)\,\bigg].
\end{align*}
Without loss of generality we suppose that $\int\limits_{[a,b]} {\mathfrak q}(dx)=0$.

    The first formula for a regularized trace was obtained by I.M. Gelfand and B.M. Levitan in 1953. In \cite{GL} they considered the problem
\begin{equation}\label{SL-GL}
    -y''+{\mathfrak q}(x)y=\lambda y; \qquad y(0) = y(\pi) = 0
\end{equation}
and showed that for a real-valued function ${\mathfrak q}(x)\in {\cal C}^1[0,\pi]$ the following relation holds:
$$
    \mathcal{S}({\mathfrak q})= -\frac{{\mathfrak q}(0)+{\mathfrak q}(\pi)}{4}.
$$
The paper \cite{GL} generated many improvements and generalizations, see a survey of V.A. Sadovnichii and V.E. Podolskii \cite{SPSurvey}.

In the recent work \cite{SZN} A.I. Nazarov, D.M. Stolyarov and P.B. Zatitskiy obtained formula
\begin{equation}\label{traceNSZ}
    \mathcal{S}({\mathfrak q}) = \frac{\psi_a(a+)}{2n}\cdot\textbf{tr}\,(\mathbb A)+\frac{\psi_b(b-)}{2n}\cdot\textbf{tr}\,(\mathbb B),
\end{equation}
for arbitrary $n\geq 2$ and regular boundary conditions, under assumptions that are standard now;\footnote{Formula (\ref{traceNSZ}) was earlier proved by R.F. Shevchenko \cite{Shv} for
a smooth function ${\mathfrak q}$ and an operator $\mathbb{L}$ without lower-order terms.} namely, ${\mathfrak q} \in L_1(a,b)$ and the functions
\begin{eqnarray}\label{psi}
\psi_a(x)=\frac{1}{x-a}\int\limits_{[a,x]} {\mathfrak q}(dt),\qquad
\psi_b(x)=\frac{1}{b-x}\int\limits_{[x,b]} {\mathfrak q}(dt)
\end{eqnarray}
have bounded variations at points $a$ and $b$ respectively. In (\ref{traceNSZ})  $\mathbb A$ and $\mathbb B$ stand for the matrices with elements that can be expressed in terms of $a_j$
and $b_j$, $j=0,\dots,n-1$. Moreover, it was shown in \cite{SZN} that in important special case, where the boundary conditions are {\bf almost separated}, the values $\textbf{tr}\,(\mathbb A)$
and $\textbf{tr}\,(\mathbb B)$ in (\ref{traceNSZ}) can be reduced and expressed using only the sums of degrees of polynomials $P_j$ and $Q_j$ respectively.

A new phenomenon was discovered in our century by A.M. Savchuk and A.A. Shkalikov \cite{S2000, SSh}, see also \cite{VinSad02}. Namely, let  ${\mathfrak q}\in \mathfrak M[0,\pi]$ be a
signed measure locally continuous at points $0$ and $\pi$. Then for the problem (\ref{SL-GL}) we have
\begin{equation}\label{SL-SSh}
    \mathcal{S}({\mathfrak q}) = -\frac{{\mathfrak q}(0)+{\mathfrak q}(\pi)}{4}-\frac{1}{8}\sum_j h_j^2,
\end{equation}
where $h_j$ stand for the jumps of the distribution function for the measure ${\mathfrak q}$. In this case the series $\mathcal{S}({\mathfrak q})$ is mean-value summable.

Thus, for ${\mathfrak q}\in \mathfrak M[a,b]$ the regularized trace becomes non-linear functional of ${\mathfrak q}$. In \cite{Konechnaya} this effect was obtained for $\delta$-potential and some other boundary conditions. See also \cite[Theorem 1]{DM} for a similar effect in a different problem.

For $n=2$ and arbitrary regular boundary conditions, the formula similar to (\ref{SL-SSh}) was obtained in \cite{NG}. Also it was shown in \cite{NG} that for $n\ge3$ a nonlinear terms does not
appear, and it was conjectured that for high order operators formula (\ref{traceNSZ}) holds for $\mathfrak q \in \mathfrak M[a,b]$.

In this paper we prove this conjecture for odd $n\geq3$ and disprove it for even $n\geq4$. Namely, in the last case formula for $\mathcal{S}({\mathfrak q})$ includes a term that has not been seen
before. This new term corresponds to the case where ${\mathfrak q}$ has an atom in the midpoint $\frac{a+b}2$.

These results were partially announced in \cite{Gal19}.
\medskip

The paper is organized as follows. In Section~\ref{Sec1} we formulate our main results (Theorems \ref{th11} and \ref{th12}) and deduce them from some interim assertions (Theorems \ref{th15} and \ref{th16}). These assertions are proved in Sections~\ref{Sec2} and \ref{Sec3} respectively. An explicit calculation of the new term is described in Section~\ref{Sec4}.\medskip

Let us recall some notation. We denote by $\mathbb{L}_0$ the operator generated by the differential expression ${\cal L}_0=(-i)^nD^n$ and boundary conditions (\ref{Boundary_Conditions}).
The eigenvalues of $\mathbb{L}_0$ are denoted by $\{\lambda_N^0\}_{_{N=1}}^{^{\infty}}$.

Further, $G_0(x,y,\lambda)$ stands for the Green function of the operator $\mathbb{L}_0-\lambda$, see \cite[ ch. I, $\mathsection 3$]{Naimark}. Notice that the resolvent $\frac 1{\mathbb{L}_0-\lambda}$
is an integral operator with the kernel $G_0(x,y,\lambda)$. So one can define the trace
$$
\mathbf{Sp}\,\frac{1}{{\mathbb L}_0 - \lambda} = \int\limits_a^b G_0(x,x,\lambda)\, dx.
$$
For arbitrary function $\Phi(\lambda)$ defined on the complex plane $\mathbb{C}$, we introduce the function $\tilde{\Phi}(z)$ by the formula
\begin{equation}\label{def_tilde}
    \tilde{\Phi}(z) = \Phi(\lambda),\quad \text{ where }\quad z = \lambda^{\frac{1}{n}}, \; Arg(z) \in [0,\frac{2\pi}{n}).
\end{equation}

   Recall the definition of summation by the mean-value method (Ces\`aro summation of order~$1$). Let $I_{\ell}$ be the sequence of partial sums corresponding to the series $\sum\limits_{j}a_j$.
The series is called mean-value summable if the following limit exists:
\begin{equation*}
({\cal C},1)\,\text{-}\lim_{\ell\to \infty} I_{\ell}:=({\cal C},1)\,\text{-}\sum \limits_{j=1}^{\infty} a_j := \lim_{k \to \infty} \frac{1}{k}\sum \limits_{\ell=1}^k I_{\ell}.
\end{equation*}

Denote by $\|\mathfrak q\|$ the total variation of $\mathfrak q$. We define the distribution function
$$
{\cal Q}(x)=\int\limits_{[a,x]} {\mathfrak q}(dt).
$$
We assume that $\mathfrak q$ has no atoms at the endpoints $a$ and $b$. This implies ${\cal Q}(a)={\cal Q}(a+)={\cal Q}(b-)={\cal Q}(b)=0$.

We say the complex-valued measure $\mathfrak q$ to be \textit{BV-regular} if the functions
$$
\psi_a(x) = \frac{{\cal Q}(x)}{x-a}, \qquad \psi_b(x) = \frac{{\cal Q}(x)}{x-b}
$$
have bounded variation on $[a,b]$. In particular, in this case the function ${\cal Q}$ is differentiable at points $a$ and $b$, and
$$
{\cal Q}'(a)=\psi_a(a+), \qquad {\cal Q}'(b)=\psi_b(b-).
$$

Let us define
$$
\Gamma^1=\Big\{w=e^{i\phi}:\phi\in\Big(0,\,\frac{\pi}{n}\Big)\Big\}; \qquad
\Gamma^2=\Big\{w=e^{i\phi}:\phi\in\Big(\frac{\pi}{n},\,\frac{2\pi}{n}\Big)\Big\}.
$$

Consider a function $R(w)$ on $\overline{\Gamma^1\cup\Gamma^2}$ such that corresponding contour
$\gamma(w) = R^n(w)w^n$ is closed and smooth. For such $R(w)$
we introduce a contour $\Gamma(w) = R(w)w$.

A sequence $\{\gamma_\ell\}$ of closed contours described above is called \textit{acceptable} if 
$R(1) = R_\ell \rightarrow \infty$ as $\ell\to\infty$, and 
for some $c_1,c_2>0$ the following conditions hold for every $\ell$:

1. $|R(w)-R(1)| \leq c_1$, $|\frac{dR(w)}{dw}| \leq c_2 R(w)$,
$w\in\overline{\Gamma^1\cup\Gamma^2}$;

2. Corresponding contours $\{\Gamma_\ell\}_{\ell=1}^\infty$ are separated from $(\lambda_N^0)^{\frac{1}{n}}$ and $(\lambda_N)^{\frac{1}{n}}$ uniformly with respect to $\ell$.

\begin{remark}\label{posled}
Assume $n$ is odd. It is well known (see, e.g., \cite[ch. II, $\mathsection 4$]{Naimark} and \cite{Shk}) that the eigenvalues are split into two sequences:
$$
\lambda_{N,j}^0 = \big((-1)^j 2 \pi N + \alpha_j+ O(\tfrac{1}{N})\big)^n,\quad j=1,2.
$$
If $n$ is even and boundary conditions (\ref{Boundary_Conditions}) are strongly regular, then (see \cite[ch. II, $\mathsection 4$]{Naimark} and \cite{Shk}) the eigenvalues are also split into two
sequences:
\begin{equation}\label{asymp}
\lambda_{N,j}^0 = \big(2 \pi N + \alpha_j+ O(\tfrac{1}{N})\big)^n,\quad j=1,2,
\end{equation}
where $\alpha_1$ and $\alpha_2$ are distinct, and $\alpha_1+\alpha_2\in\mathbb R$  (see, e.g., \cite[Theorem 1.1]{Naz09}). Therefore, in these cases there exists a sequence of acceptable contours
such that, there is exactly one eigenvalue between each two neighboring contours. Moreover, if $n$ is even and $\alpha_1 \neq \overline{\alpha}_2$ then one can take circles of radii $R_\ell^n$ as
such contours.

If $n$ is even and the boundary conditions are regular but not strongly regular then the relation (\ref{asymp}) holds with $\alpha_1=\alpha_2$. In this case we take the contours so that there is
exactly one pair of eigenvalues between each two neighboring contours.

Notice that for even $n$ the quantities $\xi_j=e^{i\alpha_j}$,
$j=1,2$, are roots of quadratic Birkhoff polynomial, see \cite[ch.
II, $\mathsection 4$]{Naimark}. Thus, we have $\xi_1\ne\xi_2$ in
strongly regular case, and $\xi_1=\xi_2$ otherwise.
\end{remark}

In what follows we use the notation $\rho=e^{i\frac {2\pi}n}$.

We denote by $\langle a\rangle$ arbitrary polynomial of $z^{-1}$ with the constant term $a$.

If the distribution function of the measure ${\mathfrak q}$ has
a jump at the point $\frac{a+b}{2}$, we denote it by $h_{\frac{a+b}{2}}$.

We introduce the function
\begin{eqnarray}\label{nu}
  \nu=\nu(w)=\begin{cases}
    \lfloor\frac{n+1}{2}\rfloor, &w \in \Gamma^1;\\
    \lfloor\frac{n}{2}\rfloor, &w \in \Gamma^2.
  \end{cases}
\end{eqnarray}

All positive constants whose exact values are not important are denoted by $C$.

\section{Formulation of the results}\label{Sec1}

Our main result consists of two following theorems:

\begin{theorem}\label{th11}
Suppose that $n \geq 3$ is odd and that the distribution function ${\cal Q}$ is differentiable at points $a$ and $b$. Then for all regular boundary conditions (\ref{Boundary_Conditions}) the
following formula holds:
\begin{equation}\label{Formula-general-odd}
    \mathcal{S}({\mathfrak q}) = \frac{{\cal Q}'(a)}{2n}\textbf{tr}\,(\mathbb A)+\frac{{\cal Q}'(b)}{2n}\textbf{tr}\,(\mathbb B).
\end{equation}

Here the matrices $\mathbb A$ and $\mathbb B$ are the same as in (\ref{traceNSZ}) (see \cite[Theorem 2]{SZN}).
The series for $\mathcal{S}({\mathfrak q})$ converges in a usual way.
\end{theorem}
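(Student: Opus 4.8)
The plan is to convert the regularized trace into a family of contour integrals of the difference of resolvent traces, expand that difference in the perturbation (Neumann) series in $\mathbb{Q}$, and show that for odd $n$ only the term linear in $\mathfrak{q}$ survives. Throughout one passes to the variable $z=\lambda^{\frac1n}$, $Arg(z)\in[0,\frac{2\pi}n)$, and integrates over a sequence of acceptable contours $\gamma_\ell=R_\ell^n w^n$. Since $\int_{[a,b]}\mathfrak{q}(dx)=0$, the definition of $\mathcal{S}(\mathfrak{q})$ collapses to $\sum_N(\lambda_N(\mathfrak{q})-\lambda_N)$, and by the residue theorem applied to the two resolvents,
\begin{equation*}
\sum_{\lambda_N\ \text{inside}\ \gamma_\ell}\big(\lambda_N(\mathfrak{q})-\lambda_N\big)=-\frac1{2\pi i}\oint_{\gamma_\ell}\lambda\,\mathbf{Sp}\Big(\frac1{\mathbb{L}_{\mathfrak q}-\lambda}-\frac1{\mathbb{L}-\lambda}\Big)\,d\lambda.
\end{equation*}
By the Remark, for odd $n$ the eigenvalues split into two well-separated sequences, so acceptable contours can be chosen with exactly one eigenvalue between consecutive ones; this is what will eventually give convergence of $\mathcal{S}(\mathfrak{q})$ in the usual (not merely Ces\`aro) sense.

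Next I would insert the expansion $\frac1{\mathbb{L}_{\mathfrak q}-\lambda}-\frac1{\mathbb{L}-\lambda}=\sum_{m\ge1}(-1)^m\frac1{\mathbb{L}-\lambda}\big(\mathbb{Q}\frac1{\mathbb{L}-\lambda}\big)^m$, which converges uniformly on $\gamma_\ell$ because $\|\mathbb{Q}\,\frac1{\mathbb{L}-\lambda}\|\to0$ there, and treat the terms one by one. For the linear term one uses $\frac1{(\mathbb{L}-\lambda)^2}=\partial_\lambda\frac1{\mathbb{L}-\lambda}$ and integrates by parts in $\lambda$ (the boundary term drops since $\gamma_\ell$ is closed), reducing it to a constant multiple of $\oint_{\gamma_\ell}\int_{[a,b]}G(x,x,\lambda)\,\mathfrak{q}(dx)\,d\lambda$, where $G$ is the Green function of $\mathbb{L}-\lambda$. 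Replacing $G$ by $G_0$, the Green function of $\mathbb{L}_0-\lambda$ --- the lower-order coefficients $p_k$ enter only through terms of lower order in $z$ that disappear in the limit --- and using the asymptotics of the diagonal $G_0(x,x,\lambda)$, which splits into a boundary-condition-independent interior part and two boundary-layer parts localized near $x=a$ and $x=b$, one recovers exactly the right-hand side of (\ref{Formula-general-odd}): the interior part integrates to zero because $\int_{[a,b]}\mathfrak{q}=0$, and the differentiability of $\mathcal{Q}$ at the endpoints turns the two boundary layers into $\frac{\mathcal{Q}'(a)}{2n}\textbf{tr}\,(\mathbb{A})$ and $\frac{\mathcal{Q}'(b)}{2n}\textbf{tr}\,(\mathbb{B})$, with the same matrices as in (\ref{traceNSZ}). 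This step is in essence the computation of \cite{SZN}.

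The heart of the matter --- and the reason odd $n$ behaves differently from even $n$ --- is to prove that every term with $m\ge2$ tends to zero as $\ell\to\infty$. After the analogous integration by parts, the quadratic term is a constant multiple of $\oint_{\gamma_\ell}\iint G_0(x,y,\lambda)\,G_0(y,x,\lambda)\,\mathfrak{q}(dx)\,\mathfrak{q}(dy)\,d\lambda$, and it is the atoms of $\mathfrak{q}$ that are delicate. For two distinct atoms $c_j\ne c_k$ the integrand is genuinely oscillatory in $z$ and its contour integral vanishes in the limit by a Riemann--Lebesgue-type estimate. For the self-interaction of a single atom at a point $c$ the only possibly surviving, non-oscillatory contribution comes from the cross-term between a wave reflected from $a$ and a wave reflected from $b$, whose combined phase is proportional to $z\big[\rho^{j}(c-a)+\rho^{k}(b-c)\big]$ and vanishes identically only if $\rho^{j}=-\rho^{k}$ and $c-a=b-c$, i.e. at the midpoint $c=\frac{a+b}2$. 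Since $-1=\rho^{n/2}$ is a power of $\rho=e^{i\frac{2\pi}n}$ only for even $n$, for odd $n$ no such stationary cross-term exists, the self-interaction integral is purely oscillatory, and the quadratic term vanishes in the limit. I expect this parity argument, made precise sector by sector in the $z$-plane with the help of the counting function $\nu(w)$ of growing and decaying exponentials on $\Gamma^1$ and $\Gamma^2$, to be the main obstacle; the same sectorial bookkeeping shows that the terms with $m\ge3$ are of strictly lower order in $z$ and vanish as well.

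It remains to assemble the pieces: the linear term gives the announced formula, all higher terms vanish, and what survives is the ordinary limit of the partial sums along the acceptable contours. Here the uniform separation of the contours from the spectrum (condition 2 of acceptability) together with the one-eigenvalue-per-gap structure for odd $n$ guarantees that the partial sums along $\gamma_\ell$ and the ordinary partial sums $\sum_{N\le M}$ share the same limit, so that no Ces\`aro regularization is needed and $\mathcal{S}(\mathfrak{q})$ converges in the usual way, as claimed.
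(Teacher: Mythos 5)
Your skeleton (contour integral of the resolvent difference, Neumann expansion, linear term gives the formula, higher terms vanish) matches the paper's overall strategy, but your two key technical claims are placed exactly backwards, and this leaves a genuine gap. The step you dismiss as ``in essence the computation of \cite{SZN}'' is precisely where the difficulty and the parity of $n$ live. The splitting of the diagonal $\tilde G_0(x,x,z)$ into an $x$-independent interior part plus boundary layers decaying like $e^{-c_0 R\min(x-a,\,b-x)}$ is \emph{not} available from \cite{SZN} for a measure: in the expansion (\ref{n-square}) each off-diagonal term $(\alpha,\beta)$, $\alpha\ne\beta$, carries the factor $e^{izx(\rho^{\beta-1}-\rho^{\alpha-1})}$, and such a term is localized near the endpoints if and only if $\Re(iw\rho^{\alpha-1})$ and $\Re(iw\rho^{\beta-1})$ cannot vanish at the same $w$ of the closed sector. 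This is exactly Lemma \ref{lemma31} of the paper: it holds for \emph{all} $\alpha\ne\beta$ only when $n$ is odd, while for even $n$ the antipodal pairs $(1,k+1)$, $(k+1,1)$, $(k,n)$, $(n,k)$ (for which $\rho^{k}=-1$) oscillate without decay throughout $(a,b)$, and it is these terms of the \emph{linear} part that generate the midpoint contribution $\frac{h_{\frac{a+b}{2}}}{2\pi}\mathfrak C$ of Theorem \ref{th12} --- note that term is linear in the jump, not quadratic, which already rules out your proposed mechanism. In \cite{SZN} the non-decaying oscillatory terms are harmless because the measure there has an $L_1$ density (Riemann--Lebesgue); against an atom they are not. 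So your argument assumes, at the decisive point, the very parity fact that has to be proven. In addition, Theorem \ref{th11} assumes only differentiability of ${\cal Q}$ at $a$ and $b$, which is weaker than the BV-regularity required in \cite{SZN}; the paper bridges this by writing $\mathfrak q=\mathfrak q_0+\mathfrak q_1$ with $\mathfrak q_0$ smooth matching ${\cal Q}'(a)$, ${\cal Q}'(b)$, applying (\ref{traceNSZ}) to $\mathfrak q_0$, and proving (Theorem \ref{th15}, via Lemma \ref{lemma31} and the integration by parts of Lemma \ref{lemma32}) that the linear term for $\mathfrak q_1$ vanishes --- a reduction your outline also needs but never makes.

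Conversely, what you call the heart of the matter is not where the action is. The vanishing of all Neumann terms with $m\ge2$ holds for every $n\ge3$, odd or even, and requires no stationary-phase or parity argument: each Green kernel is $O(|z|^{1-n})$ on acceptable contours (with $\hat\Delta$ uniformly separated from zero), so the $m$-th term is $O\big(R^{\,n-m(n-1)}\big)=o(1)$ for $m\ge2$, $n\ge3$; this is the content of Proposition \ref{prop14}, quoted from \cite{NG}, whose statement is parity-independent. Only $n=2$ is borderline, and that is where the $h_j^2$ terms of (\ref{SL-SSh}) originate. Had your quadratic self-interaction mechanism been real, it would predict an $h^2$-type correction at the midpoint for even $n$, contradicting Theorem \ref{th12}. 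The conclusion of Theorem \ref{th11} is of course correct, but as written your proof spends its effort on a step that is already known and easy for $n\ge3$, while the genuinely new step --- the linear term against a measure with atoms, which is where odd $n$ must be used --- is left unproved.
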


For $n$ even, a new term appears. It depends on the value of a jump of the distribution function at the point $\frac{a+b}{2}$. 

\begin{theorem}\label{th12}
Suppose that $n \geq 4$ is even and that the complex-valued measure ${\mathfrak q}\in \mathfrak M[a,b]$ is BV-regular. Then for all regular boundary conditions (\ref{Boundary_Conditions}) the following
formula holds:
\begin{equation}\label{Formula-general}
    \mathcal{S}({\mathfrak q}) = \frac{{\cal Q}'(a)}{2n}\textbf{tr}\,(\mathbb A)+\frac{{\cal Q}'(b)}{2n}\textbf{tr}\,(\mathbb B)+\frac{h_{\frac{a+b}{2}}}{2\pi}\,{\mathfrak{C}}.
\end{equation}
Here the matrices $\mathbb A$ and $\mathbb B$ are the same as in (\ref{traceNSZ}) (see \cite[Theorem 2]{SZN}), and the coefficient $\mathfrak{C}$ is defined in (\ref{mathfrakC}).

If the boundary conditions (\ref{Boundary_Conditions}) are
strongly regular (this corresponds to distinct roots of the
Birkhoff polynomial, $\xi_1\ne\xi_2$)
 then the series for $\mathcal{S}({\mathfrak q})$ can be summed by the Ces\'aro method.

If the boundary conditions (\ref{Boundary_Conditions}) are regular but not strongly regular (this corresponds to the case $\xi_1=\xi_2$) then the series can be summed by the Ces\'aro method with brackets. Namely, the terms related to coinciding or asymptotically close eigenvalues
are summed pairwise, and then the appeared series is summed by the Ces\'aro method.
\end{theorem}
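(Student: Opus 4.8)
The plan is to write $\mathcal S(\mathfrak q)$ as a limit of contour integrals of the resolvent and to expand in powers of the perturbation. Fix an acceptable sequence $\{\gamma_\ell\}$ as in Remark~\ref{posled}. Since each $\gamma_\ell$ separates the eigenvalues of both $\mathbb L$ and $\mathbb L_{\mathfrak q}$ (one, or, when $\xi_1=\xi_2$, one pair, per gap), the residue theorem identifies the relevant partial sum of $\mathcal S(\mathfrak q)$ with
\[
\frac{1}{2\pi i}\oint_{\gamma_\ell}\lambda\,\mathbf{Sp}\Big[\tfrac{1}{\mathbb L_{\mathfrak q}-\lambda}-\tfrac{1}{\mathbb L-\lambda}\Big]\,d\lambda ,
\]
the subtracted constant $\tfrac{1}{b-a}\int\mathfrak q$ vanishing by our normalisation. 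Writing $R=\tfrac{1}{\mathbb L-\lambda}$ and using the Neumann series $\tfrac{1}{\mathbb L_{\mathfrak q}-\lambda}-R=\sum_{m\ge1}(-1)^mR(\mathbb QR)^m$ together with $\mathbf{Sp}[R(\mathbb QR)^m]=\tfrac1m\tfrac{d}{d\lambda}\mathbf{Sp}[(\mathbb QR)^m]$ and one integration by parts, the trace becomes $\tfrac{1}{2\pi i}\sum_{m\ge1}\tfrac{(-1)^{m+1}}{m}\oint_{\gamma_\ell}\mathbf{Sp}[(\mathbb QR)^m]\,d\lambda$. A preliminary step replaces $\mathbb L$ by $\mathbb L_0$: since $\sum p_kD^k$ is subordinate to $(-i)^nD^n$, the coefficients $p_k$ perturb $G(x,y,\lambda)$ only at relative order $z^{-1}$ on the contours and do not affect the first-order trace. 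Crucially, as $G_0\sim z^{-(n-1)}$, the $m$-th term scales like $R_\ell^{\,1-(m-1)(n-1)}$; hence for $n\ge3$ every term with $m\ge2$ tends to $0$, and \emph{$\mathcal S(\mathfrak q)$ reduces to the single linear-in-$\mathfrak q$ term $m=1$}. (For $n=2$ the term $m=2$ survives and produces the nonlinear $\sum h_j^2$ of \cite{SSh}; here it does not.)

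I would then analyse the surviving term $\tfrac{1}{2\pi i}\oint\big[\int_{[a,b]}G_0(x,x,\lambda)\,\mathfrak q(dx)\big]\,d\lambda$. Passing to $z=\lambda^{1/n}$ and folding the contour by the symmetry $\rho=e^{i2\pi/n}$ reduces everything to the two arcs $\Gamma^1\cup\Gamma^2$, on which the sectorial structure of $G_0$—namely how many characteristic exponents $z\omega_k$ decay—is recorded by $\nu(w)$. On the diagonal $G_0(x,x,\lambda)$ splits into three parts: an $x$-independent bulk term, two boundary layers concentrated at $a$ and $b$, and an oscillatory remainder $\Omega(x,\lambda)$. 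The bulk integrates against $\mathfrak q$ to $\int\mathfrak q=0$; the boundary layers, integrated against $\mathfrak q$ and over $\Gamma^1\cup\Gamma^2$, reproduce exactly $\tfrac{\mathcal Q'(a)}{2n}\mathbf{tr}(\mathbb A)+\tfrac{\mathcal Q'(b)}{2n}\mathbf{tr}(\mathbb B)$, with $\psi_a(a+)=\mathcal Q'(a)$ and $\psi_b(b-)=\mathcal Q'(b)$ by BV-regularity; for the matrices $\mathbb A,\mathbb B$ I would quote \cite[Theorem~2]{SZN}.

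The new phenomenon lives in the oscillatory remainder $\Omega(x,\lambda)$. Against an absolutely continuous $\mathfrak q$ it averages out (Riemann--Lebesgue, in the Cesàro sense), which is why for $\mathfrak q\in L_1$ one recovers (\ref{traceNSZ}); an atom, however, samples $\Omega$ at a single point. The key observation is that for even $n$ the characteristic roots are invariant under $\omega\mapsto-\omega$, so the reflections off $a$ and off $b$ carry conjugate exponents that reinforce precisely at the point equidistant from both ends, $x=\tfrac{a+b}{2}$; equivalently, the relevant exponent $z\omega(b-a)$ becomes purely imaginary on the interface $\phi=\pi/n$ between $\Gamma^1$ and $\Gamma^2$, so $\Omega\big(\tfrac{a+b}{2},\lambda\big)$ ceases to oscillate in the summation variable. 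Thus an atom at the midpoint yields a surviving contribution, linear in its jump, equal to $\tfrac{h_{\frac{a+b}{2}}}{2\pi}\,\mathfrak C$; for odd $n$ no root pairs in this way, $\Omega$ stays genuinely oscillatory at every interior point, and interior atoms contribute nothing—this is exactly Theorem~\ref{th11}. The explicit evaluation of $\mathfrak C$, assembled from the surviving exponents over $\Gamma^1\cup\Gamma^2$ and bookkept by $\nu$, is the content of Section~\ref{Sec4}, formula~(\ref{mathfrakC}).

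Finally I would settle the mode of summation. The midpoint contribution is a resonant series of the type $\sum_N e^{i\theta N}$, which is Cesàro but not ordinarily summable; this forces the $(\mathcal C,1)$ statement in the strongly regular case $\xi_1\ne\xi_2$. When $\xi_1=\xi_2$ the two branches in~(\ref{asymp}) merge, neighbouring eigenvalues coalesce, and one must first add the two terms enclosed between consecutive contours and only then average—hence Cesàro summation with brackets. I expect the main obstacle to be the uniform asymptotic control of $\Omega(x,\lambda)$ along the acceptable contours: one has to isolate it cleanly from the bulk and boundary layers, prove its vanishing against the continuous part of $\mathfrak q$, and establish its resonance at the midpoint for even $n$ while showing that all other reflected exponents decay or cancel after the $\lambda$-integration, uniformly in $\ell$. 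This sectorial bookkeeping across $\Gamma^1$ and $\Gamma^2$—rather than any single estimate—is the technical heart of the argument, and is presumably what the interim Theorems~\ref{th15} and~\ref{th16} package for use here.
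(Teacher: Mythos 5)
Your skeleton---reduce to the linear term $\frac{1}{2\pi i}\oint_{\gamma_\ell}\int_{[a,b]}G_0(x,x,\lambda)\,\mathfrak q(dx)\,d\lambda$ by a Neumann-series argument (this is essentially Proposition \ref{prop14}, which the paper simply quotes from \cite{NG}), then analyse bulk, boundary-layer and oscillatory parts of $G_0$---is the right architecture, but there is a genuine gap at the boundary-term step. You claim that the boundary layers, integrated against the \emph{measure} $\mathfrak q$, ``reproduce exactly'' $\frac{{\cal Q}'(a)}{2n}\textbf{tr}\,(\mathbb A)+\frac{{\cal Q}'(b)}{2n}\textbf{tr}\,(\mathbb B)$, quoting \cite[Theorem 2]{SZN} for the matrices. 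This cannot be quoted: the result of \cite{SZN} is proved for $\mathfrak q\in L_1(a,b)$, and for $\mathfrak q\in\mathfrak M[a,b]$ formula (\ref{traceNSZ}) is precisely what is at issue (indeed it is false in general---that is the point of the new term), so the boundary-layer analysis against a measure would have to be carried out from scratch, and your proposal never does it. The paper's proof avoids this entirely by the decomposition $\mathfrak q=\mathfrak q_0+\mathfrak q_1$, where $\mathfrak q_0$ is a smooth mean-zero function with $\mathfrak q_0(a)={\cal Q}'(a)$, $\mathfrak q_0(b)={\cal Q}'(b)$: formula (\ref{traceNSZ}) applies legitimately to the smooth $\mathfrak q_0$ and yields the two boundary terms, while $\mathfrak q_1$ has vanishing endpoint derivatives and hence falls under the interim Theorem \ref{th16}, which produces only the midpoint term; additivity of the two contributions follows since the right-hand side of (\ref{prop_equality}) is linear in the measure. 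The hypothesis ${\cal Q}'(a)={\cal Q}'(b)=0$ in Theorem \ref{th16} is not cosmetic: it is exactly what kills the residual boundary contributions in the proof of Lemma \ref{lemma41} (e.g.\ the boundary term in $J_{11}$ vanishes because $\psi_0(0+)=0$). Without this reduction (or a full re-derivation of \cite{SZN} for measures) your argument does not close.

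A secondary but substantive inaccuracy concerns your mechanism for the midpoint. For even $n$ the surviving pairs $(\alpha,\beta)\in\{(1,k+1),(k+1,1),(k,n),(n,k)\}$ degenerate at the outer edges $Arg(w)=0$ and $Arg(w)=\frac{2\pi}{n}$ of $\Gamma^1\cup\Gamma^2$, not at the interface $\phi=\frac{\pi}{n}$: at $\phi=\frac{\pi}{n}$ one has $\Re(iw\rho^{m-1})=-\sin\big(\pi(2m-1)/n\big)$, which never vanishes for even $n$ since $2m-1$ is odd. Moreover the midpoint is selected not by ``reinforcement of the reflections off $a$ and $b$'' but by resonance with the spacing of the contours: along $R_\ell=R_0+2\pi\ell$ the non-decaying phases are $e^{2iR_\ell x}$ and $e^{2iR_\ell(1-x)}$, and Ces\`aro averaging produces Dirichlet-type kernels such as $\frac{1}{\ell}\,\frac{1-e^{4\pi i\ell x}}{1-e^{4\pi i x}}$, which are uniformly bounded and tend to zero except where $e^{4\pi ix}=1$, i.e.\ $x\in\{0,\frac12,1\}$ (after normalizing $[a,b]=[0,1]$); since $\mathfrak q$ has no atoms at the endpoints, dominated convergence leaves only the atom at the midpoint---this is what Lemma \ref{lemma41} actually establishes. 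The same bookkeeping explains the odd case: the three degeneracies of $\Re(iw\rho^{m-1})$ occur at three distinct angles, so no off-diagonal pair survives at all (Lemma \ref{lemma31} and Theorem \ref{th15}), with the limit existing in the ordinary sense.
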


The value of the constant $\mathfrak{C}$ is given by the following theorem.

\begin{theorem}\label{th13}
   Suppose that $n \geq 4$ be even and that a sequence of acceptable contours $\gamma_\ell$ is chosen in accordance to Remark \ref{posled}.

If the boundary conditions (\ref{Boundary_Conditions}) are strongly regular (recall that this corresponds to the case $\xi_1\ne\xi_2$), then
\begin{equation}\label{th13-log}
    \mathfrak{C} = \mathfrak c\,\frac{Log(-\xi_2/\xi_1)}{\xi_1-\xi_2}.
\end{equation}
 
 If the boundary conditions (\ref{Boundary_Conditions}) are regular, but not strongly regular (this corresponds to the case $\xi_1=\xi_2$), then
\begin{equation}\label{th13-nolog}
    \mathfrak{C} = -\frac{\mathfrak c}{\xi_1}=-\frac{\mathfrak c}{\xi_2}.
\end{equation}
The constant $\mathfrak c$ is defined in (\ref{frak-c}) and depends only on the leading coefficients of polynomials $P_j$ and $Q_j$ in boundary conditions (\ref{Boundary_Conditions}).
The $Log$ sign in (\ref{th13-log}) stands for the branch of logarithm with $\Im Log \in (-\pi, \pi)$.
\end{theorem}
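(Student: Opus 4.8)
The plan is to start from the definition \eqref{mathfrakC} of $\mathfrak{C}$, which I expect to present it as a limit over a sequence of acceptable contours $\gamma_\ell$ of a contour integral whose integrand records the second-order (hence nonlinear) contribution of an atom of $\mathfrak q$ at the midpoint $\frac{a+b}{2}$. I expect this integrand to split into two factors: a purely local factor, namely the constant $\mathfrak c$ from \eqref{frak-c} that is built from the behaviour of the Green function $G_0$ of $\mathbb L_0$ near $x=\frac{a+b}{2}$ and therefore depends only on the leading coefficients $a_j,b_j$; and a global factor governed by the characteristic determinant of $\mathbb L_0$. The whole computation is thus reduced to evaluating the global contour integral explicitly and multiplying by $\mathfrak c$.

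First I would pass from $\lambda$ to $z=\lambda^{1/n}$ and then to the angular variable $w=z/R(w)$ on $\overline{\Gamma^1\cup\Gamma^2}$, so that the limit $R_\ell\to\infty$ turns the integral over $\gamma_\ell$ into an integral over the arcs $\Gamma^1$ and $\Gamma^2$. Here the function $\nu(w)$ from \eqref{nu} enters: it counts how many of the roots of unity $\rho^k$ contribute a growing exponential on each sector, and this is exactly what fixes the power of the relevant phase on $\Gamma^1$ versus $\Gamma^2$. By the Birkhoff asymptotics recalled in Remark \ref{posled}, the denominator of the integrand is asymptotically the quadratic Birkhoff polynomial, whose roots are $\xi_1$ and $\xi_2$; all lower-order corrections must be shown to vanish in the limit by means of the uniform bounds on $R(w)$ and $\frac{dR}{dw}$ built into the definition of acceptable contours.

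In the strongly regular case $\xi_1\ne\xi_2$ I would decompose the resulting rational integrand by partial fractions, $\frac{1}{(\zeta-\xi_1)(\zeta-\xi_2)}=\frac{1}{\xi_1-\xi_2}\bigl(\frac{1}{\zeta-\xi_1}-\frac{1}{\zeta-\xi_2}\bigr)$, where $\zeta=\zeta(w)$ denotes the phase factor traced out as $w$ runs over the arcs. Integrating each simple fraction along the $\zeta$-path produces a difference of logarithms; combining them and evaluating at the endpoints dictated by the sectors $\Gamma^1,\Gamma^2$ collapses the answer to $\mathfrak c\,\frac{Log(-\xi_2/\xi_1)}{\xi_1-\xi_2}$, which is \eqref{th13-log}. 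Keeping track of the orientation of the contour and of the argument accumulated along the $\zeta$-path is what pins down the branch $\Im Log\in(-\pi,\pi)$.

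In the regular but not strongly regular case $\xi_1=\xi_2$ the partial-fraction decomposition degenerates into a double pole $\frac{1}{(\zeta-\xi_1)^2}$; the same integration then yields a residue governed by the derivative, giving $-\mathfrak c/\xi_1$, which is \eqref{th13-nolog}. This case is genuinely distinct from the previous one rather than a naive limit $\xi_2\to\xi_1$ of it (indeed the expression in \eqref{th13-log} diverges as $\xi_2\to\xi_1$, since its numerator tends to $Log(-1)$ while its denominator tends to $0$); the discrepancy is explained by the bracketed Ces\`aro summation, where the two asymptotically close eigenvalues are paired before summation, which is precisely what replaces the two simple poles by a single double pole. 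I expect the main obstacle to be twofold: first, justifying rigorously that the non-leading terms of the characteristic determinant and of $R(w)$ do not survive the passage to the limit, which requires the uniform estimates of the acceptable contours together with a careful control of the remainder; and second, the bookkeeping of the branch of the logarithm and of the sector-dependent exponents encoded by $\nu(w)$, where an error of $2\pi i$ in the accumulated argument would corrupt the final constant.
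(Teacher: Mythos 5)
Your outline matches the paper's strategy only at the very top level (reduce (\ref{mathfrakC}) to an explicit rational integral in the phase variable, then use partial fractions and logarithms), but the two steps you gloss over are exactly where the work lies, and as described one of them would fail. On the arcs $R_\ell\Gamma^1$, $R_\ell\Gamma^2$ the phase $\zeta=e^{iz}$ is not a usable integration variable: as $w$ runs over $\Gamma^1$, $\arg\big(e^{iR_\ell w}\big)=R_\ell\cos(Arg\,w)$ sweeps an interval of length $R_\ell(1-\cos\frac{\pi}{n})$, so the $\zeta$-path is a spiral winding a number of times that grows like $R_\ell$, and ``evaluating the logarithms at the endpoints'' is meaningless without controlling that winding --- precisely the $2\pi i$ bookkeeping you flag as an obstacle but do not resolve. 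The paper resolves it by first merging the $\Gamma^1$-terms ${\mathbb I}^{\delta}_{1,k+1}+{\mathbb I}^{\delta}_{k+1,1}$ with the rotated $\Gamma^2$-terms ${\mathbb I}^{\delta}_{k,n}+{\mathbb I}^{\delta}_{n,k}$ via $\tilde z=\rho^{-1}z$ and the sign relations (\ref{frak-num}), and then deforming the resulting arc $R_\ell(\rho^{-1}\Gamma^2\cup\Gamma^1)$ by the Cauchy theorem onto the vertical segment $\Re z=R_\ell$, $|\Im z|\le R_\ell\sin\frac{\pi}{n}$ (Fig.~\ref{fig:2}); only there does the substitution $t=e^{-\tau}$ produce a genuine ray, yielding $\mathfrak c\int_0^\infty e^{-iR_0}\,dt\,/\,\big((t-e^{-iR_0}\xi_1)(t-e^{-iR_0}\xi_2)\big)$ (Lemma~\ref{lemma51}) with an unambiguous branch. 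Your proposal contains no substitute for this deformation.

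The second gap is more serious: a single limit computation cannot give (\ref{th13-log}), because the ray integral above depends on $R_0$ through the branch. It equals $\frac{1}{\xi_1-\xi_2}$ times \emph{some} branch of $\ln(\xi_2/\xi_1)$, and which branch is determined by where the direction $e^{iR_0}$ sits relative to $\xi_1,\xi_2$, i.e.\ by the choice of the subsequence of pair-separating circles. In the strongly regular case the acceptable contours of Remark~\ref{posled} separate \emph{single} eigenvalues; the paper splits them into the two interlacing subsequences $R_{2\ell}=R_0+2\pi\ell$ and $R_{2\ell-1}=R_1+2\pi\ell$, applies Lemma~\ref{lemma51} to each, and averages by (\ref{polusumma}). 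The two branches differ by $2\pi i$, and only their mean gives the principal value $Log(-\xi_2/\xi_1)$ with $\Im Log\in(-\pi,\pi)$; this averaging, not endpoint bookkeeping ``dictated by the sectors,'' is the source of the minus sign inside the $Log$ (the subcase $\alpha_1=\overline\alpha_2$ needs a further continuity argument). It is also the true explanation of the discrepancy between (\ref{th13-log}) and (\ref{th13-nolog}): in the non-strongly regular case only pair-separating contours exist, the double pole is forced by the coincident roots $\xi_1=\xi_2$ of the Birkhoff polynomial (it is not ``created'' by the bracketed summation), and $\int_0^\infty e^{-iR_0}(t-e^{-iR_0}\xi_1)^{-2}dt=-1/\xi_1$ is branch-free and $R_0$-independent, so no averaging occurs. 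Finally, a minor correction: $\mathfrak c$ in (\ref{frak-c}) is built from the constant terms $\mathfrak m_{1,k+1}$, $\mathfrak m_{k+1,1}$, $\mathfrak m$ of the boundary-condition determinants, not from the local behaviour of the Green function near the midpoint; the atom at $\frac{a+b}{2}$ only supplies the phase factor $e^{iz}$.
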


\begin{remark}
 In the case $\xi_2/\xi_1\in \mathbb R_+$ we cannot define the natural order of the eigenvalues since $\alpha_1=\overline\alpha_2$ in (\ref{asymp}). In this case the choice
$\Im Log=\pm\pi$ in (\ref{th13-log}) depends on the order of summation.

 Notice also that formula (\ref{th13-nolog}) differs from the limit of (\ref{th13-log}) as $\xi_2\to\xi_1$, since the series for $\mathcal{S}({\mathfrak q})$ is summed in different ways.
\end{remark}

To prove Theorem \ref{th11} and Theorem \ref{th12} we need the
following statement.

\begin{proposition}[\cite{NG}, Theorem 2.2]
\label{prop14}
    Let $n \geq 3$. For every acceptable sequence of contours $\gamma_\ell$ the following relation holds as $\ell \rightarrow \infty$ (summation in the left hand side is taken over
$\lambda_N({\mathfrak q})$, $\lambda_N$ that are inside $\gamma_\ell$):
    \begin{equation}\label{prop_equality}
\sum\Big[\lambda_N({\mathfrak q})-\lambda_N\Big]  = \frac{i}{2 \pi}\int\limits_{\gamma_\ell}\int\limits_{[a,b]} G_0(x,x,\lambda)\,{\mathfrak q}(dx)\,d\lambda + o(1).
    \end{equation}
\end{proposition}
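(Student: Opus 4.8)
The plan is to turn the spectral sum on the left of~(\ref{prop_equality}) into a contour integral of a perturbation determinant by the argument principle, and then to isolate its linear-in-$\mathfrak q$ part. First I would introduce the characteristic determinants $D(\lambda)$ and $D_{\mathfrak q}(\lambda)$ whose zeros, counted with multiplicity, are the eigenvalues of $\mathbb L$ and $\mathbb L_{\mathfrak q}$. Condition~2 in the definition of an acceptable sequence keeps $\gamma_\ell$ away from the eigenvalues of $\mathbb L$, and since $\mathfrak q$ is a finite measure it shifts the spectrum by a bounded amount, so for large $\ell$ the numbers of eigenvalues of $\mathbb L$ and of $\mathbb L_{\mathfrak q}$ inside $\gamma_\ell$ coincide and neither $D$ nor $D_{\mathfrak q}$ vanishes on $\gamma_\ell$. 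The argument principle with weight $f(\lambda)=\lambda$ then gives
\begin{equation*}
\sum\big[\lambda_N(\mathfrak q)-\lambda_N\big]=\frac{1}{2\pi i}\oint_{\gamma_\ell}\lambda\,\frac{d}{d\lambda}\log\frac{D_{\mathfrak q}(\lambda)}{D(\lambda)}\,d\lambda .
\end{equation*}
Because the two eigenvalue counts agree, $\log(D_{\mathfrak q}/D)$ is single-valued along $\gamma_\ell$, so integration by parts removes the weight $\lambda$ with no boundary term, and, using $D_{\mathfrak q}/D=\det\!\big(1+\mathbb Q\,R(\lambda)\big)$ with $R(\lambda)=(\mathbb L-\lambda)^{-1}$,
\begin{equation*}
\sum\big[\lambda_N(\mathfrak q)-\lambda_N\big]=-\frac{1}{2\pi i}\oint_{\gamma_\ell}\log\det\!\big(1+\mathbb Q\,R(\lambda)\big)\,d\lambda .
\end{equation*}

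Next I would expand the perturbation determinant, $\log\det(1+\mathbb Q R)=\sum_{m\ge1}\frac{(-1)^{m-1}}{m}\mathbf{Sp}\big[(\mathbb Q R)^m\big]$, and keep only the first term. The identity $\mathbf{Sp}[\mathbb Q R(\lambda)]=\int_{[a,b]}G(x,x,\lambda)\,\mathfrak q(dx)$, with $G$ the Green function of $\mathbb L-\lambda$, converts the $m=1$ contribution into the contour integral in~(\ref{prop_equality}) (the prefactor $-\tfrac1{2\pi i}$ equals $\tfrac{i}{2\pi}$), except that $G$ must still be replaced by the model Green function $G_0$ of $\mathbb L_0-\lambda$. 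For that replacement I would use the resolvent identity $R-R_0=-R_0\big(\sum_{k\le n-2}p_k D^k\big)R$ to express $G-G_0$ through the lower-order coefficients and show that its contribution to $\oint_{\gamma_\ell}\int G\,\mathfrak q\,d\lambda$ is $o(1)$; the gain is that the perturbing expression has order at most $n-2$, two units below the principal part, so each correction carries extra decay in $|\lambda|$ along the contour.

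The heart of the matter, and the step I expect to be the main obstacle, is to prove that everything discarded is $o(1)$ as $\ell\to\infty$: both the tail $\sum_{m\ge2}$ of the perturbation series and the lower-order correction $G-G_0$. This rests on sharp uniform estimates of $G_0(x,y,\lambda)$ for $\lambda$ running over the acceptable contours, where $|\lambda|\to\infty$. Birkhoff regularity of the boundary conditions supplies such estimates, while the growth restrictions $|R(w)-R(1)|\le c_1$, $|\,dR/dw\,|\le c_2 R(w)$ and the uniform separation from the spectrum (conditions~1 and~2) make them uniform in $\ell$. The decay needed for the terms $\mathbf{Sp}[(\mathbb Q R)^m]$ with $m\ge2$ is exactly where the hypothesis $n\ge3$ enters: for $n=2$ the second-order term survives and yields the nonlinear Savchuk--Shkalikov correction, whereas for $n\ge3$ the extra decay of $G_0$ forces all $m\ge2$ contributions to vanish in the limit, leaving only the linear integral. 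Finally, the normalization $\int_{[a,b]}\mathfrak q(dx)=0$ is what keeps the surviving integral bounded, since it annihilates the leading $x$-independent part of $G_0(x,x,\lambda)$; the finer $x$-dependent structure of $G_0$ is then what later produces the midpoint term in Theorem~\ref{th12}, but at the level of Proposition~\ref{prop14} only the clean linear integral remains.
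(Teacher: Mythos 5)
First, a point of order: this paper never proves Proposition~\ref{prop14}. It is imported verbatim from \cite{NG} (Theorem 2.2) and used as a black box; all the in-paper work (Theorems~\ref{th15}, \ref{th16}, \ref{th13}) concerns the passage to the limit in the right-hand side of (\ref{prop_equality}). So your proposal can only be measured against the argument of \cite{NG}, and its skeleton --- characteristic determinants, argument principle with weight $\lambda$, integration by parts, expansion of a perturbation determinant, survival of only the linear term --- is indeed the standard route for results of this type. Your localization of the hypothesis $n\ge3$ is also exactly right: the $m$-th trace is $O\big(\|\mathfrak q\|^m|z|^{m(1-n)}\big)$ uniformly on acceptable contours, the contour carries $\lambda$-measure $O(|z|^{n})$, and $n+m(1-n)<0$ for all $m\ge2$ precisely when $n\ge3$ (at $n=2$, $m=2$ this is $O(1)$: the Savchuk--Shkalikov correction).

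Three steps of your plan, however, have genuine gaps. (i) Since $\mathfrak q$ is a measure, $\mathbb Q$ is not a bounded operator on $L_2(a,b)$, so $\det\big(1+\mathbb Q R(\lambda)\big)$ and $\mathbf{Sp}\big[(\mathbb Q R)^m\big]$ do not exist as written; one must either realize the perturbation determinant on $L_2(|\mathfrak q|)$ (polar-factorize $\mathfrak q$ and use that the resolvent kernel is continuous and the measure finite, in the spirit of Savchuk--Shkalikov), or define $D_{\mathfrak q}$ through a fundamental system of the ODE with measure coefficient (quasi-derivatives) and expand it by Picard iteration; in either case the identity $D_{\mathfrak q}/D=1+\int_{[a,b]} G(x,x,\lambda)\,\mathfrak q(dx)+\dots$ is a statement to be proved, not quoted. (ii) Your justification of the equal eigenvalue count --- ``a finite measure shifts the spectrum by a bounded amount'' --- assumes what must be proved: boundedness of the shifts for a measure perturbation of a higher-order, possibly non-self-adjoint operator is itself a nontrivial asymptotic result, and moreover acceptability only guarantees that $\gamma_\ell$ is separated from $(\lambda_N^0)^{1/n}$ and $(\lambda_N)^{1/n}$, not from $(\lambda_N(\mathfrak q))^{1/n}$, so even the applicability of the argument principle to $D_{\mathfrak q}$ on $\gamma_\ell$ needs an argument. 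The clean route is a Rouch\'e-type comparison of $D_{\mathfrak q}$ with $D$ in the $z$-plane. (iii) In the replacement $G\to G_0$, pure magnitude estimates are not enough: the correction produced by $p_{n-2}D^{n-2}$ is $O(|z|^{-n})$ on the diagonal, which against the contour measure $O(|z|^{n})$ gives $O(1)$ --- the same order as the main term, not $o(1)$. The ``extra decay'' you invoke is exactly used up at $k=n-2$; to conclude one needs the oscillating exponentials in the Green function and a Riemann--Lebesgue-type cancellation (this is the kind of analysis carried out in \cite{SZN}), and this is where the real work of \cite{NG} lies.
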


Passage to the limit in the right hand side of (\ref{prop_equality}) is provided by the following interim statements.

\begin{theorem}\label{th15}
   Let $n \geq 3$ be odd, and let ${\cal Q}'(a) = {\cal Q}'(b) = 0$. Then for every sequence of acceptable contours $\gamma_\ell$ the following equality holds:
\begin{equation*}
    \lim_{\ell\to \infty}\int\limits_{\gamma_\ell}\int\limits_{[a,b]} G_0(x,x,\lambda){\mathfrak q}(dx)\,d\lambda = 0.
\end{equation*}
\end{theorem}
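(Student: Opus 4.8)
The plan is to pass to the variable $z=\lambda^{1/n}$ of \eqref{def_tilde} and to exploit the explicit structure of the diagonal Green function $\widetilde G_0(x,x,z)$ of $\mathbb L_0$. Since $\lambda=z^n$, the closed contour $\gamma_\ell$ lifts to an arc $\Gamma_\ell$ running over $\overline{\Gamma^1\cup\Gamma^2}$, and
\begin{equation*}
\int\limits_{\gamma_\ell}\int\limits_{[a,b]}G_0(x,x,\lambda)\,{\mathfrak q}(dx)\,d\lambda=n\int\limits_{\Gamma_\ell}\Big(\,\int\limits_{[a,b]}\widetilde G_0(x,x,z)\,{\mathfrak q}(dx)\Big)z^{n-1}\,dz .
\end{equation*}
Because no eigenvalue lies on $\gamma_\ell$ and $\mathfrak q$ is finite, Fubini applies. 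I recall from \cite[ch.~I--II]{Naimark} that, writing the characteristic roots as $\mu_k=i\rho^kz$, the quantity $nz^{n-1}\widetilde G_0(x,x,z)$ decomposes for large $z$ into a \emph{bulk} part, independent of $x$ and governed by $\nu(w)$ from \eqref{nu}, plus \emph{correction} terms built from the exponentials $e^{\mu_k(x-a)+\mu_l(b-x)}$ with $O(1)$ coefficients determined by the boundary conditions. The bulk part contributes nothing, precisely because $\int_{[a,b]}{\mathfrak q}(dx)=0$; this is where the normalization of $\mathfrak q$ is used. It then remains to estimate the inner integral multiplied by $nz^{n-1}$ along $\Gamma_\ell$; since $\operatorname{length}(\Gamma_\ell)=O(R_\ell)=O(|z|)$, a pointwise bound of order $\varepsilon/|z|$ (for every $\varepsilon$, outside a negligible part of the arc) will suffice.

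First I treat the contribution away from the \emph{neutral directions}, i.e.\ the arguments $\phi\in\{0,\tfrac{\pi}{n},\tfrac{2\pi}{n}\}$ at which $\operatorname{Re}\mu_k=0$ for some $k$. Outside fixed neighborhoods of these three directions every $\mu_k$ satisfies $|\operatorname{Re}\mu_k|\ge c|z|$, so each correction term is a genuine boundary layer of the form $O(1)\cdot e^{-\beta z(x-a)}$ (or its mirror at $b$) with $\operatorname{Re}(\beta z)\ge c|z|$. Integrating such a term against $d{\cal Q}$ by parts, the boundary values vanish since ${\cal Q}(a)={\cal Q}(b)=0$, leaving $\beta z\int_a^b e^{-\beta z(x-a)}{\cal Q}(x)\,dx$. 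Splitting $[a,b]=[a,a+\delta]\cup[a+\delta,b]$ and using ${\cal Q}'(a)=0$, hence $|{\cal Q}(x)|\le\varepsilon(x-a)$ near $a$, bounds this by $C\varepsilon/|z|+Ce^{-c|z|\delta}$; the mirror estimate at $b$ uses ${\cal Q}'(b)=0$. Integrating over $\Gamma_\ell$ gives a quantity with $\limsup_\ell\le C\varepsilon$, hence $0$ as $\varepsilon\to0$.

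The crux is to rule out any interior contribution, and this is exactly where $n$ odd is decisive. An interior atom of $\mathfrak q$ can feel a correction term $e^{\mu_k(x-a)+\mu_l(b-x)}$ only if that term fails to decay into the interior, which near a neutral direction forces the relevant modes to be \emph{neutral}. A midpoint-localized, non-decaying term of the form $e^{2\mu_k(x-\frac{a+b}2)}$ (the source of the new term for even $n$) requires a pair of neutral modes with $\mu_l=-\mu_k$, i.e.\ $\rho^l=-\rho^k$. Since $-1\in\{\rho^k\}$ exactly when $n$ is even, for odd $n$ each of the three neutral directions carries a \emph{single} neutral mode (at $k=0$, $k=\tfrac{n-1}2$, $k=n-1$ respectively), so the degenerating correction is always a single-endpoint oscillation, never a midpoint-localized one. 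Consequently no interior contribution survives, and the limit is a genuine limit rather than a Ces\`aro one.

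I expect the main obstacle to be the analysis in the shrinking neighborhoods of the three neutral directions, where a single mode is nearly neutral and the decay rate $\operatorname{Re}(\beta z)$ degenerates below $c|z|$. There the boundary-layer estimate is replaced by a Riemann--Lebesgue-type bound for $\int_a^b e^{\mu_k(x-a)}{\cal Q}(x)\,dx$ (using continuity of $\cal Q$ together with ${\cal Q}'=0$ at the home endpoint), and one must balance the weak pointwise bound inside the neighborhood against its vanishing angular width, invoking the acceptable-contour condition that $\Gamma_\ell$ stays separated from $(\lambda_N^0)^{1/n}$. Carrying this out carefully, and checking that the single-mode oscillations at the neutral directions produce no midpoint contribution for odd $n$, completes the proof.
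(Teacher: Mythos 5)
Your skeleton matches the paper's proof: pass to $z=\lambda^{1/n}$, expand $nz^{n-1}\tilde G_0(x,x,z)$ via the explicit determinant formula (\ref{n-square}), kill the $x$-independent diagonal terms using $\int_{[a,b]}\mathfrak q(dx)=0$, and observe that for odd $n$ no direction carries two opposite neutral modes. Your treatment away from the neutral directions (integration by parts against $d{\cal Q}$, vanishing boundary terms, $|{\cal Q}(x)|\le\varepsilon(x-a)$ from ${\cal Q}'(a)=0$) is also essentially the paper's Lemma \ref{lemma32}. However, the step you yourself defer as ``the main obstacle'' --- the neighborhoods of the three neutral directions --- is a genuine gap, and the repair you sketch would not close it. Since the contour has length $O(R_\ell)$, you need the inner integral to be $o(1/R_\ell)$; a Riemann--Lebesgue bound for $\int_a^b e^{\mu_k(x-a)}{\cal Q}(x)\,dx$ gives only a rateless $o(1)$, and is not uniform over exponents $\mu_k$ whose real parts are small but nonzero. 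The balancing against angular width fails quantitatively: to beat the factor $R_\ell$ the neighborhood would need width $O(1/R_\ell)$, but just outside such a neighborhood the nearly neutral mode has decay rate only $|\Re\mu_k|\gtrsim R_\ell\cdot(1/R_\ell)=O(1)$, so the single-mode boundary-layer estimate there returns an $O(1)$ contribution (e.g.\ the tail $\int_{1/R_\ell} R_\ell e^{-cR_\ell\phi\delta}\,d\phi=O(1)$), not $o(1)$. Also, your parenthetical appeal to ``continuity of $\cal Q$'' is unavailable: $\mathfrak q$ may have interior atoms, so ${\cal Q}$ is merely of bounded variation.

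The missing idea --- and the way the paper avoids any angular case distinction at all --- is to use the decay of the \emph{partner} mode inside the neutral neighborhoods. Each off-diagonal term carries two exponents paired with complementary weights: by Proposition \ref{proposition21} its modulus is at most $Ce^{R\Psi_{\alpha,\beta}(w,x)}$ with
\begin{equation*}
\Re\Psi_{\alpha,\beta}(w,x)=-|\Re(iw\rho^{\alpha-1})|\,|\eta^1_{\alpha}(w,x)|-|\Re(iw\rho^{\beta-1})|\,|\eta^2_{\beta}(w,x)|,
\qquad |\eta^1_{\alpha}|,|\eta^2_{\beta}|\ge\min(x,1-x).
\end{equation*}
For odd $n$, at most one member of any pair $\alpha\ne\beta$ can be neutral at any $w\in\overline{\Gamma^1\cup\Gamma^2}$ (this is exactly your own structural observation), hence $|\Re(iw\rho^{\alpha-1})|+|\Re(iw\rho^{\beta-1})|\ge c_0$ uniformly on the closed sector, which yields the two-sided bound $Ce^{-c_0R\min(x,1-x)}$ everywhere, neutral directions included --- this is the paper's Lemma \ref{lemma31}. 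After that, one single estimate, uniform in $w$ (the paper's Lemma \ref{lemma32}: integrate by parts against the measure, split at $x\sim R^{-1/2}$, use ${\cal Q}(x)=o(x)$ near the endpoints), finishes the proof with no decomposition of the arc. In short, your odd-$n$ insight is correct, but it must be used quantitatively, as the source of the uniform pairwise decay, rather than only qualitatively to exclude midpoint-localized terms.
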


\begin{theorem}\label{th16}
   Let $n \geq 4$ be even, and let a complex-valued measure ${\mathfrak q}\in \mathfrak M[a,b]$ be BV-regular. Assume also that ${\cal Q}'(a) = {\cal Q}'(b) = 0$.
Finally, let a sequence of acceptable contours $\gamma_\ell$ be chosen in accordance to Remark~\ref{posled}. Then the following equality holds:
\begin{equation}\label{C1=0}
    ({\cal C},1)\,\text{-}\lim_{\ell\to \infty}\int\limits_{\gamma_\ell}\int\limits_{[a,b]} G_0(x,x,\lambda){\mathfrak q}(dx)\,d\lambda
    = -i \mathfrak{C} \cdot h_{\frac{a+b}{2}}.
\end{equation}
\end{theorem}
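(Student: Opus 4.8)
The plan is to reduce the double integral in \eqref{C1=0} to a single midpoint-localized term by exploiting the explicit structure of the diagonal Green function $G_0(x,x,\lambda)$. First I would write $G_0(x,x,\lambda)$, after the substitution $z=\lambda^{1/n}$, as a ratio whose numerator is a sum of exponentials built from the fundamental solutions $e^{iz\rho^k x}$ and whose denominator is the characteristic determinant $\Delta_0(z)$ of $\mathbb{L}_0$. Ordering the roots $\mu_k=iz\rho^k$ by real part separately on $\Gamma^1$ and on $\Gamma^2$ (this is exactly the bookkeeping encoded by the index $\nu(w)$ in \eqref{nu}), I would split $G_0(x,x,\lambda)$ into three groups of terms: a translation-invariant bulk part; boundary-layer parts concentrated near $x=a$ and near $x=b$ (carrying factors $e^{iz\rho^k(x-a)}$, resp. $e^{iz\rho^k(x-b)}$, that decay away from the respective endpoint on the relevant arc); and the global terms in which the denominator $\Delta_0$ genuinely participates.

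Second, I would dispose of the bulk and boundary-layer contributions. For these the argument is the same as for the odd case in Theorem \ref{th15}: the hypotheses $\mathcal{Q}'(a)=\mathcal{Q}'(b)=0$ together with BV-regularity permit integration by parts in $\int_{[a,b]}(\cdot)\,\mathfrak{q}(dx)$ and a Riemann--Lebesgue-type estimate, so that after contour integration these pieces are $o(1)$ and hence Cesàro-null. What is special to even $n$ is the pairing $\rho^{k+n/2}=-\rho^k$: it produces in the global part a term carrying the exponential $e^{iz\sigma(2x-a-b)}$, where $\sigma$ is a middle root, and $2x-a-b=2\bigl(x-\tfrac{a+b}{2}\bigr)$ vanishes exactly at the midpoint.

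Third --- the decisive step --- I would show that integrating this midpoint-sensitive term against $\mathfrak{q}$ annihilates everything except the atom at $\tfrac{a+b}{2}$. Writing $\int_{[a,b]} e^{iz\sigma(2x-a-b)}\,\mathfrak{q}(dx)$ and using BV-regularity together with the vanishing of $\mathcal{Q}'$ at the endpoints, the absolutely continuous and singular-continuous parts, as well as all atoms away from the midpoint, give a Cesàro-null contribution, while the atom at $\tfrac{a+b}{2}$ contributes $h_{\frac{a+b}{2}}$, the exponent being $0$ there. This collapses the double integral to $h_{\frac{a+b}{2}}$ times the pure contour integral $\int_{\Gamma_\ell}\frac{P(z)}{\Delta_0(z)}\,n z^{n-1}\,dz$ of the global kernel evaluated at $x=\tfrac{a+b}{2}$, whose $({\cal C},1)$-limit defines the constant $-i\mathfrak{C}$ (made explicit in Theorem \ref{th13}), yielding $-i\mathfrak{C}\cdot h_{\frac{a+b}{2}}$.

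I expect the main obstacle to be the evaluation of this last contour integral. Parametrizing $\Gamma_\ell$ by $w=e^{i\phi}$ and inserting the leading asymptotics of $\Delta_0(z)$ --- whose relevant factor is the quadratic Birkhoff polynomial in the exponentials with roots $\xi_1,\xi_2$ --- the integrand becomes a bounded oscillatory function of $R_\ell$, so the individual integrals do not converge. The choice of acceptable contours from Remark \ref{posled} (exactly one eigenvalue, resp. one pair, between neighbouring contours) makes the sequence quasi-periodic in $\ell$, so that the $({\cal C},1)$ average extracts its mean; summing the resulting geometric-type series over the two eigenvalue subsequences produces the logarithmic form when $\xi_1\neq\xi_2$ and, after pairing terms, the rational form when $\xi_1=\xi_2$. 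The delicate points are controlling the oscillation of $1/\Delta_0(z)$ uniformly across both sectors $\Gamma^1,\Gamma^2$ (where $\nu$ jumps), treating the non-strongly-regular case $\xi_1=\xi_2$ where the double root forces summation with brackets, and checking that the midpoint-localization survives the order-of-summation subtleties noted after Theorem \ref{th13}.
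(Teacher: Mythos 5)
Your proposal takes essentially the same route as the paper: expand the diagonal Green function as in (\ref{n-square}), reduce to the four antipodal-root terms $(1,k+1)$, $(k+1,1)$, $(k,n)$, $(n,k)$ via the decay estimates (Lemmas \ref{lemma31} and \ref{lemma32}), split off the midpoint atom whose contribution is $-i\mathfrak{C}\cdot h_{\frac{a+b}{2}}$ by the definition (\ref{mathfrakC}), and show that the remaining atom-free BV-regular part is Ces\`aro-null through the midpoint resonance of the oscillating exponentials plus dominated convergence (the paper's Lemma \ref{lemma41}), with subsequence arguments ((\ref{polusumma}) and Proposition \ref{proposition41}) covering the strongly regular cases. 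The delicate points you flag at the end (periodicity of the contour radii across the two eigenvalue subsequences, the $\xi_1=\xi_2$ bracketed summation, the $\alpha_1=\overline{\alpha}_2$ ordering issue) are precisely the ones the paper resolves, so the plan is sound and matches the published proof.
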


\begin{proof}[Proof of Theorems \ref{th11} and \ref{th12}]
We decompose the measure $\mathfrak q$ into two parts:
\begin{align*}
\mathfrak q = \mathfrak{q}_0 + \mathfrak{q}_1,
\end{align*}
where $\mathfrak{q}_0$ is a smooth function with $\mathfrak{q}_0(a)={\cal Q}'(a)$, $\mathfrak{q}_0(b)={\cal Q}'(b)$, and $\int\limits_{[a,b]} {\mathfrak q}_0(dx)=0$.

Let $n\ge3$ be odd.
Then $\mathfrak{q}_1$ satisfies the assumptions of Theorem \ref{th15}. Since
$\mathcal{S}({\mathfrak q})=\mathcal{S}({\mathfrak q}_0)+\mathcal{S}({\mathfrak q}_1)$, formula (\ref{Formula-general-odd}) follows from (\ref{traceNSZ}) for ${\mathfrak q}_0$, (\ref{prop_equality})
and Theorem \ref{th15} for ${\mathfrak q}_1$.

Now let $n\ge4$  be even. Then $\mathfrak{q}_1$ satisfies the assumptions of Theorem \ref{th16}. Formula (\ref{Formula-general}) follows from (\ref{traceNSZ}) for ${\mathfrak q}_0$,
(\ref{prop_equality}) and Theorem \ref{th16} for ${\mathfrak q}_1$.
\end{proof}

\section{Auxiliary estimates. Proof of Theorem~\ref{th15}}\label{Sec2}

Here and further we assume without loss of generality that $a = 0$, $b = 1$.

We begin with the explicit formula for the Green function, see \cite[formula (12)]{SZN}. For $y=x$ this gives:

\begin{align}\label{n-square}
\tilde{G}_0(x,x,z)=-\frac{i}{n z^{n-1}}\sum\limits_{\alpha,\beta=1}^n \rho^{\alpha-1}e^{i z x(\rho^{\beta-1}-\rho^{\alpha-1})}\cdot\frac{\Delta_{\alpha,\beta}(z)}{\Delta(z)}
\end{align}
(the determinants $\Delta(z)$, $\Delta_{\alpha,\beta}(z)$ are introduced in Appendix).

For the sake of brevity introduce a notation $k = \frac{n}{2}$ for even $n$.

\begin{lemma}\label{lemma31}
Let a pair $(\alpha,\beta)$, $\alpha\neq\beta$, be arbitrary for odd $n$, and let
\begin{equation}\label{alphabeta}
    (\alpha,\beta)\ne(1,k+1), \ (k+1,1), \ (k,n), \ (n,k)
\end{equation}
for even $n$. Then for every acceptable
sequence of contours $\gamma_\ell$ the following estimate holds:
\begin{align*}
\left|e^{iR(w)w x(\rho^{\beta-1}-\rho^{\alpha-1})}\frac{\Delta_{\alpha,\beta}(R(w) w)}{\Delta(R(w) w)}\right| \leq C e^{-c_0 R(w) \min(x,1-x)},
\end{align*}
where $C$, $c_0 > 0$.
\end{lemma}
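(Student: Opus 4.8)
The plan is to bound the two factors in the product — the pure exponential $e^{iR(w)wx(\rho^{\beta-1}-\rho^{\alpha-1})}$ and the ratio $\Delta_{\alpha,\beta}/\Delta$ — by Birkhoff asymptotics, and then to check that the total exponent is dominated by the convex "tent" $-c_0\min(x,1-x)$. Throughout write $z=R(w)w=R(w)e^{i\phi}$ with $\phi\in[0,\tfrac{2\pi}{n}]$, and set $\theta_\gamma=\tfrac{2\pi(\gamma-1)}{n}$, $\mu_\gamma=iz\rho^{\gamma-1}$, so that $\Re\mu_\gamma=-R(w)\sin(\phi+\theta_\gamma)$ and $|e^{\mu_\gamma}|=e^{-R(w)\sin(\phi+\theta_\gamma)}$. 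The conditions $|R(w)-R(1)|\le c_1$ and $|R'(w)|\le c_2R(w)$ let me treat $R(w)\asymp R_\ell$ as essentially constant along the contour, so every exponential comparison below is uniform in $w$ and $\ell$.

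First I would recall from the Appendix that $\Delta(z)$ and $\Delta_{\alpha,\beta}(z)$ are determinants assembled from the boundary functionals $y\mapsto(P_j(D)y)(0)+(Q_j(D)y)(1)$ applied to the fundamental solutions $e^{\mu_\gamma x}$ of ${\cal L}_0y=z^ny$. The $(j,\gamma)$ entry equals $P_j(\mu_\gamma)+Q_j(\mu_\gamma)e^{\mu_\gamma}$, whose leading term in $|z|$ is $a_j\mu_\gamma^{d_j}$ when $\Re\mu_\gamma<0$ (the datum at $x=0$ dominates) and $b_j\mu_\gamma^{d_j}e^{\mu_\gamma}$ when $\Re\mu_\gamma>0$ (the datum at $x=1$ dominates). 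Let $G=\{\gamma:\sin(\phi+\theta_\gamma)<0\}$ be the growing indices. Birkhoff regularity is precisely the statement that, after factoring out $z^\varkappa$ and $\prod_{\gamma\in G}e^{\mu_\gamma}$, the remaining Birkhoff determinant is bounded away from $0$; combined with condition~2 of acceptability (uniform separation of $\Gamma_\ell$ from the zeros $(\lambda^0_N)^{1/n}$ of $\Delta$), this yields the two-sided bound
\begin{equation*}
|\Delta(z)|\asymp|z|^{\varkappa}\prod_{\gamma\in G}|e^{\mu_\gamma}|
\end{equation*}
uniformly in $w,\ell$. The same analysis of the Appendix determinant gives a matching upper bound $|\Delta_{\alpha,\beta}(z)|\le C|z|^{\varkappa'}\prod_{\gamma\in G_{\alpha,\beta}}|e^{\mu_\gamma}|$, where the index set $G_{\alpha,\beta}$ and the power $\varkappa'$ are read off from the column carrying the $\alpha$-data in place of the $\beta$-th solution column.

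Collecting exponentials, the left-hand modulus becomes $C|z|^{\varkappa'-\varkappa}e^{R(w)(\sigma_{\alpha,\beta}(\phi)+x\,\delta_{\alpha,\beta}(\phi))}$, where $\delta_{\alpha,\beta}(\phi)=\sin(\phi+\theta_\alpha)-\sin(\phi+\theta_\beta)$ comes from the prefactor and $\sigma_{\alpha,\beta}(\phi)=\sum_{\gamma\in G}\sin(\phi+\theta_\gamma)-\sum_{\gamma\in G_{\alpha,\beta}}\sin(\phi+\theta_\gamma)$ is the $R$-density of $\log|\Delta_{\alpha,\beta}/\Delta|$. The polynomial factor $|z|^{\varkappa'-\varkappa}$ is harmless (swallowed by the interior exponential decay, and controlled at the endpoints), so the estimate reduces to the pointwise inequality $\sigma+x\delta\le-c_0\min(x,1-x)$ on $[0,1]$. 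Since the right side is a convex piecewise-linear tent and the left side is affine in $x$, this is equivalent to the three numerical inequalities at $x=0,\tfrac12,1$: namely $\sigma\le0$, $\sigma+\delta\le0$, and $\sigma+\tfrac{\delta}{2}\le-\tfrac{c_0}{2}$. The first two merely express boundedness of $\tilde G_0(x,x,z)$ at the two endpoints; the third, strict, inequality is what produces genuine decay into the interior.

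The final and main step is the uniform sign verification, where the hypothesis on $(\alpha,\beta)$ is decisive. Within the closed arc $\phi\in[0,\tfrac{2\pi}{n}]$, a single sector of width $\tfrac{2\pi}{n}$, the function $\sin(\phi+\theta_\gamma)$ vanishes only at the endpoints: for even $n$ at $\phi=0$ for $\gamma\in\{1,k+1\}$ and at $\phi=\tfrac{2\pi}{n}$ for $\gamma\in\{k,n\}$, while for odd $n$ each endpoint annihilates a single index ($\gamma=1$ and $\gamma=n$ respectively). Hence two distinct indices become simultaneously borderline — both $\mu_\alpha,\mu_\beta$ purely imaginary, equal to $\pm iR(w)$ — only in the even case and only for the four pairs $(1,k+1),(k+1,1),(k,n),(n,k)$. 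For exactly these the prefactor degenerates to a pure oscillation $e^{\pm2iR(w)x}$ while the determinant ratio is $O(1)$ at the endpoint, so no decay at $x\approx\tfrac12$ survives and no uniform $c_0$ exists; this is why they are excluded and treated separately. For every admissible pair, at least one of $\mu_\alpha,\mu_\beta$ retains a real part of definite sign (equivalently $\sigma_{\alpha,\beta}<0$ strictly) throughout the arc, so the middle inequality holds with a gap at each $\phi$; continuity and compactness of the arc then furnish a single $c_0>0$, completing the estimate. The genuinely delicate bookkeeping is the extraction of $G_{\alpha,\beta}$, hence of $\sigma_{\alpha,\beta}$, from the Appendix determinants and the verification of the strict middle inequality for all admissible pairs; the endpoint inequalities $\sigma\le0$ and $\sigma+\delta\le0$ are comparatively routine.
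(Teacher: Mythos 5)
Your proof has the same skeleton as the paper's: dominate the ratio by determinant asymptotics, reduce the claim to a sign condition on an exponent affine in $x$, and check that two indices can be simultaneously borderline only for even $n$ and only for the four excluded pairs, compactness of the closed arc then giving a uniform $c_0$. Indeed, with the correct index set your $\sigma_{\alpha,\beta}+x\,\delta_{\alpha,\beta}$ is exactly the real part of the paper's $\Psi_{\alpha,\beta}(w,x)$, and your inequality at $x=\tfrac12$ is exactly the paper's statement that $|\Re(iw\rho^{\alpha-1})|+|\Re(iw\rho^{\beta-1})|$ is uniformly separated from zero for admissible pairs. The difference is that the paper gets the determinant asymptotics for free by citing Proposition \ref{proposition21} (quoted from \cite{SZN}), whereas you re-derive them -- and your derivation has a genuine gap at exactly that point.

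The gap: the bound $|\Delta_{\alpha,\beta}(z)|\le C|z|^{\varkappa'}\prod_{\gamma\in G_{\alpha,\beta}}|e^{\mu_\gamma}|$ is only asserted (``read off from the column carrying the $\alpha$-data''), and the column-by-column estimate this phrase suggests is insufficient whenever $\alpha$ is a growing index, i.e. $\Re\mu_\alpha>0$. In that case $\Delta_{\alpha,\beta}$ contains two columns carrying the \emph{same} growing exponential: the untouched column $\alpha$, with entries $P_{j-1}(\mu_\alpha)+e^{\mu_\alpha}Q_{j-1}(\mu_\alpha)$, and the substituted column, with entries $e^{\mu_\alpha}Q_{j-1}(\mu_\alpha)$. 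Bounding columns separately counts $|e^{\mu_\alpha}|$ twice; for instance for $\alpha>\nu(w)\ge\beta$ it yields only $|\Delta_{\alpha,\beta}/\Delta|\le Ce^{\Re\mu_\alpha}$, so at $x=0$ your estimate reads $Ce^{R(w)|\Re(iw\rho^{\alpha-1})|}$ where the Lemma demands $O(1)$, and your three-point inequality fails at $x=0$. The missing idea is one column operation: subtract the substituted column from column $\alpha$; their growing parts cancel identically, leaving the purely polynomial column $P_{j-1}(\mu_\alpha)$, and only then does the multiplicity-one product (with $\alpha$ replacing $\beta$ in the index set) become a legitimate bound. This cancellation is precisely what Proposition \ref{proposition21} encodes, and citing it, as the paper does, would close the gap. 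Two smaller slips: the parenthetical ``equivalently $\sigma_{\alpha,\beta}<0$ strictly'' is wrong -- for $\alpha>\nu(w)\ge\beta$ one has $\sigma_{\alpha,\beta}\equiv0$ and all decay comes from the $x\delta$ term; and ``at least one of $\mu_\alpha,\mu_\beta$ retains a real part of definite sign throughout the arc'' is also wrong (for odd $n$ take $(\alpha,\beta)=(1,n)$: each real part vanishes at one endpoint, and moreover for odd $n$ there is an interior zero at $Arg(w)=\pi/n$ for the middle index, which your enumeration misses). What your compactness argument actually needs -- and what is true -- is only that the two real parts never vanish at the same point $w$ of the closed arc, so these slips are repairable; the determinant bound is the step that needs real work.
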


\begin{proof}
By Proposition \ref{proposition21} we obtain
\begin{align*}
\left|e^{iR(w)w x(\rho^{\beta-1}-\rho^{\alpha-1})}\frac{\Delta_{\alpha,\beta}(R(w) w)}{\Delta(R(w) w)}\right| \leq C e^{R(w)\Psi_{\alpha,\beta}(w,x)},
\end{align*}
where
$$
\Psi_{\alpha,\beta}(w,x) = iw\rho^{\alpha-1}\eta^1_{\alpha}(w, x)+iw\rho^{\beta-1}\eta^2_{\beta}(w, x);
$$
\begin{eqnarray*}
    \eta^1_{\alpha}(w, x) =
\left\{
    \begin{array}{lr}
        1-x, & \alpha \leq \nu(w), \\
        0-x, & \alpha > \nu(w); \\
    \end{array}
\right.\quad
\eta^2_{\beta}(w, x) =
\left\{
    \begin{array}{lr}
        x-0, & \beta \leq \nu(w), \\
        x-1, & \beta > \nu(w). \\
    \end{array}
\right.
\end{eqnarray*}

We recall that the function $\nu(w)$ is introduced in (\ref{nu}) and differs for odd and even $n$. Namely, If $n$ is even, then $\nu(w)=\frac{n}{2}$ for all $w\in\Gamma^1\cup\Gamma^2$. If $n$ is odd, then $\nu(w)=\frac{n+1}{2}$ for $w\in\Gamma^1$
and $\nu(w)=\frac{n-1}{2}$ for $w\in\Gamma^2$.

Notice that the real part of both summands in $\Psi_{\alpha,\beta}(w,x)$ is negative for all $(w,x) \in (\Gamma^1\cup\Gamma^2)\times(0,1)$. Moreover, if $n$ is odd then the real part of
$iw\rho^{m-1}$, $w\in\overline{\Gamma^1\cup\Gamma^2}$, can be equal to zero only in three cases:
\begin{itemize}
\item $m = 1$, $Arg(w) = 0$,
\item $m = \frac{n-1}{2}$, $Arg(w) = \frac{\pi}{n}$,
\item $m = n$, $Arg(w) = \frac{2\pi}{n}$.
\end{itemize}
Therefore, for $n$ odd, the quantity $|\Re(iw\rho^{\alpha-1})|+|\Re(iw\rho^{\beta-1})|$ is separated from zero for all $\alpha \neq \beta$ uniformly w.r.t. $w\in\overline{\Gamma^1\cup\Gamma^2}$.
This implies
\begin{eqnarray*}
&& \Re(\Psi_{\alpha,\beta}(w,x)) 
=-|\Re(iw\rho^{\alpha-1})|\cdot|\eta^1_{\alpha}(x)|-|\Re(iw\rho^{\beta-1})|\cdot|\eta^2_{\beta}(x)| \\
&\leq& -\min(x,1-x)(|\Re(iw\rho^{\alpha-1})|+|\Re(iw\rho^{\beta-1})|) \leq -c_0\min(x,1-x)
\end{eqnarray*}
for some positive constant $c_0$. This proves Lemma for odd $n$.\medskip

If $n$ is even then $\Re(iw\rho^{m-1})$ can be equal to zero only in four cases:
\begin{itemize}
\item $m = 1$, $Arg(w) = 0$,
\item $m = k$, $Arg(w) = \frac{2\pi}{n}$,
\item $m = k+1$, $Arg(w) = 0$,
\item $m = n$, $Arg(w) = \frac{2\pi}{n}$.
\end{itemize}

In all other cases $|\Re(i w \rho^{m-1})|$ is separated from zero uniformly w.r.t. $w \in \overline{\Gamma^1\cup\Gamma^2}$. Thus, the inequality $\Re(\Psi_{\alpha,\beta}(w,x))\leq -c_0\min(x,1-x)$
holds for even $n$ provided $(\alpha,\beta)$ satisfy (\ref{alphabeta}). This proves Lemma for even $n$.
\end{proof}

\begin{lemma}\label{lemma32}
Let the distribution function of a complex-valued measure $\mathfrak q$ satisfy ${\cal Q}'(0) = {\cal Q}'(1) = 0$.\footnote{In this Lemma we do not assume that $\int\limits_{[0,1]} {\mathfrak q}(dx)=0$.} Suppose that $\Xi$ is a bounded function, and
$$
\Psi\in {\cal C}^1(\overline{\Gamma^1 \cup \Gamma^2}\times[0,1]); \qquad \Re(\Psi(w,x)) \leq -c_0 \min(x,1-x).
$$
Then the following relation holds for $R=R(w) \rightrightarrows \infty$:
$$
\int\limits_{\Gamma^1 \cup \Gamma^2}\int\limits_{[0,1]} R e^{R \Psi(w,x)}\Xi(Rw)\mathfrak q(dx)dw = o(1).
$$
\end{lemma}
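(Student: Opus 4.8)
The plan is to reduce the double integral to its inner $x$-integral for fixed $w$, remove the measure $\mathfrak q$ by a Stieltjes integration by parts against the distribution function ${\cal Q}$, and then use the hypothesis ${\cal Q}'(0)={\cal Q}'(1)=0$ to absorb the dangerous factor of $R$. First I would fix $w\in\Gamma^1\cup\Gamma^2$ and set $I(w):=\int_{[0,1]}Re^{R\Psi(w,x)}\,\mathfrak q(dx)=\int_{[0,1]}Re^{R\Psi(w,x)}\,d{\cal Q}(x)$. Since $\Psi\in{\cal C}^1$, the function $x\mapsto Re^{R\Psi(w,x)}$ is $C^1$ on $[0,1]$, while ${\cal Q}$ is of bounded variation with ${\cal Q}(0)={\cal Q}(1)=0$. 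Integrating by parts in the Riemann–Stieltjes sense, the boundary term $[Re^{R\Psi(w,x)}{\cal Q}(x)]_0^1$ vanishes, and I obtain
$$
I(w)=-R^2\int_0^1 {\cal Q}(x)\,\Psi_x(w,x)\,e^{R\Psi(w,x)}\,dx .
$$

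Next I would estimate $I(w)$ uniformly in $w$. Using that $\Psi_x$ is bounded on the compact set $\overline{\Gamma^1\cup\Gamma^2}\times[0,1]$ together with the sign condition $\Re\Psi(w,x)\le-c_0\min(x,1-x)$, one gets $|I(w)|\le CR^2\int_0^1|{\cal Q}(x)|\,e^{-c_0R\min(x,1-x)}\,dx$. I would split the integral at $x=\tfrac12$ and treat the two symmetric halves identically. Near $x=0$, the condition ${\cal Q}'(0)=0$ means ${\cal Q}(x)=o(x)$: for any $\varepsilon>0$ there is $\delta>0$ with $|{\cal Q}(x)|\le\varepsilon x$ on $(0,\delta)$, whence $R^2\int_0^\delta\varepsilon x\,e^{-c_0Rx}\,dx\le\varepsilon\,R^2\int_0^\infty x\,e^{-c_0Rx}\,dx=\varepsilon/c_0^2$. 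The leftover $R^2\int_\delta^{1/2}|{\cal Q}(x)|\,e^{-c_0Rx}\,dx\le C\,\|\mathfrak q\|\,R^2e^{-c_0R\delta}\to0$. The analysis near $x=1$ is the same, using ${\cal Q}'(1)=0$, i.e. ${\cal Q}(x)=o(1-x)$.

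To conclude, I would note that on an acceptable sequence of contours $R(w)\ge R(1)-c_1\rightrightarrows\infty$, so the bounds above are uniform in $w$ and give $\sup_{w}|I(w)|\to0$. Since $\Xi$ is bounded and the arcs $\Gamma^1\cup\Gamma^2$ have finite length, the outer integral is controlled by
$$
\Big|\int_{\Gamma^1\cup\Gamma^2}I(w)\,\Xi(Rw)\,dw\Big|\le \sup_{w}|I(w)|\cdot\|\Xi\|_\infty\cdot|\Gamma^1\cup\Gamma^2|\longrightarrow 0,
$$
which is the claimed $o(1)$.

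The step I expect to be the main obstacle is controlling the factor of $R$: after integration by parts it even becomes $R^2$, so a crude bound $|{\cal Q}(x)|\le\|\mathfrak q\|$ fails. The only thing that rescues the estimate is that ${\cal Q}'(0)={\cal Q}'(1)=0$ forces ${\cal Q}$ to vanish to higher-than-linear order at the endpoints, which exactly matches the Laplace-type scaling $R^2\int_0^\infty x\,e^{-c_0Rx}\,dx=O(1)$; the $o(x)$ refinement then upgrades this $O(1)$ to $o(1)$. A minor point to verify carefully is the validity of the Stieltjes integration by parts when $\mathfrak q$ has atoms, which holds here because the integrand $x\mapsto Re^{R\Psi(w,x)}$ is continuous and hence shares no discontinuities with ${\cal Q}$.
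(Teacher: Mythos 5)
Your overall strategy (Stieltjes integration by parts to remove the measure, then a Laplace-type estimate using ${\cal Q}'(0)={\cal Q}'(1)=0$) is the same as the paper's, but your execution has a genuine gap at the right endpoint. The footnote in the statement deliberately removes the standing assumption $\int_{[0,1]}\mathfrak q(dx)=0$; this generality is essential, because in Section 3 the lemma is applied to $\mathfrak q_1$, the measure $\mathfrak q$ with its atom at $\tfrac12$ removed, whose total mass is $-h_{\frac12}\ne0$ in general. Consequently ${\cal Q}(1)=\int_{[0,1]}\mathfrak q(dx)$ need not vanish, and your claim that the boundary term $\bigl[Re^{R\Psi(w,x)}{\cal Q}(x)\bigr]_0^1$ disappears is false: the contribution at $x=1$ equals $Re^{R\Psi(w,1)}{\cal Q}(1)$, and since the hypothesis only gives $\Re\Psi(w,1)\le-c_0\min(1,0)=0$, this term can be of order $R$. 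The same error propagates into your estimate near $x=1$: the condition ${\cal Q}'(1)=0$ gives ${\cal Q}(x)-{\cal Q}(1)=o(1-x)$, not ${\cal Q}(x)=o(1-x)$; with ${\cal Q}(1)\ne0$ your bound $R^2\int_{1/2}^1|{\cal Q}(x)|e^{-c_0R(1-x)}dx$ is in fact of order $R$. So your written argument produces two individually divergent quantities, and the cancellation between them is never established.

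The repair is exactly what the paper does: choose an interior point $\widehat x$ at which ${\cal Q}$ is continuous, split the integral at $\widehat x$, and on each piece integrate by parts against the distribution function anchored at the nearest endpoint --- ${\cal Q}(x)$ on $[0,\widehat x]$ and ${\cal Q}(x)-{\cal Q}(1)$ on $[\widehat x,1]$. Then the endpoint boundary terms vanish (no atoms at $0$, $1$, and the anchoring), the boundary terms at $\widehat x$ are $O\bigl(Re^{-c_0R\min(\widehat x,1-\widehat x)}\bigr)$, and the hypotheses ${\cal Q}'(0)=0$, ${\cal Q}'(1)=0$ give precisely $|{\cal Q}(x)|\le\varepsilon x$ near $0$ and $|{\cal Q}(x)-{\cal Q}(1)|\le\varepsilon(1-x)$ near $1$, which is what the Laplace scaling $R^2\int_0^\infty xe^{-c_0Rx}dx=O(1)$ requires. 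The rest of your argument --- uniformity in $w$ from $R(w)\ge R(1)-c_1$, boundedness of $\Xi$, and your fixed-$\delta$ version of the cutoff (the paper uses the $R$-dependent threshold $\tau_R=R(1)^{-1/2}$ instead, an equivalent bookkeeping) --- is sound and matches the paper modulo this repair.
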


\begin{proof}
We choose a point $\widehat x\in(0,1)$ such that $\cal Q$ is continuous at $\widehat x$ and split the integration segment $[0,1]$ into two parts: $x \in [0,\widehat x]$ and $x \in [\widehat x,1]$. We prove the estimate of the first integral, the second one is estimated similarly.

Integration by parts w.r.t. $x$ gives
\begin{eqnarray*}
\int\limits_{[0,\widehat x]} R e^{R \Psi(w,x)}\Xi(Rw)\mathfrak q(dx) &=& R {\cal Q}(\widehat x)e^{R \Psi(w,\widehat x)}\Xi(Rw)\\
&-& \int\limits_0^{\widehat x} R^2\Psi'_x(w,x) {\cal Q}(x)e^{R \Psi(w,x)}\Xi(Rw)dx.
\end{eqnarray*}
The first term here is $O(Re^{-c_0R\widehat x})$ uniformly in $w$.
To manage the second term we define $\tau_R = R(1)^{-\frac{1}{2}}$. By assumption ${\cal Q}'(0) =0$ we have $|{\cal Q}(x)| \leq \varepsilon_R x$ for $x\in[0,\tau_R]$ where $\varepsilon_R \rightarrow 0$ as $R \rightarrow \infty$.
Therefore
we have
\begin{eqnarray*}
&&\Big|\,\int\limits_{\Gamma^1 \cup \Gamma^2} \int\limits_0^{\widehat x} R^2\Psi'_x(w,x) {\cal Q}(x)e^{R \Psi(w,x)}\Xi(Rw)dxdw\Big|\\
&\leq& C \varepsilon_R\Big|\int\limits_0^{\tau_R} R^2 x e^{-c_0 R x}dx\Big| + C \|\mathfrak q\| R^2 e^{-c_0R\tau_R} \le C \varepsilon_R+CR^2 e^{-c_0\sqrt{R}},
\end{eqnarray*}
and the statement follows.
\end{proof}

\begin{proof}[Proof of Theorem \ref{th15}]
We rewrite the integral using the representation (\ref{n-square}):
\begin{eqnarray}\nonumber
    &&\int\limits_{\gamma_\ell}\int\limits_{[0,1]} G_0(x,x,\lambda)\mathfrak q(dx)d\lambda \\
    \label{int-n-sq}
    &=&\int\limits_{\Gamma_\ell}\int\limits_{[0,1]} \tilde{G}_0(x,x,z)nz^{n-1}\mathfrak q(dx)dz = -i\sum_{\alpha,\beta=1}^n \,\int\limits_{\Gamma_\ell}{\cal I}_{\alpha,\beta}(z)\,dz,
\end{eqnarray}
where
\begin{eqnarray}\label{int-alpha-beta}
    {\cal I}_{\alpha,\beta}(z)=\int\limits_{[0,1]} \rho^{\alpha-1}e^{i z x (\rho^{\beta-1}-\rho^{\alpha-1})}\cdot\frac{\Delta_{\alpha,\beta}(z)}{\Delta(z)}\,\mathfrak q(dx).
\end{eqnarray}
If $\alpha = \beta$, the integral (\ref{int-alpha-beta}) equals zero by the assumption $\int\limits_{[0,1]} {\mathfrak q}(dx)=0$. For $\alpha\ne\beta$, we write
\begin{eqnarray*}
\int\limits_{\Gamma_\ell}{\cal I}_{\alpha,\beta}(z)\,dz 
&=& \int\limits_{\Gamma^1 \cup \Gamma^2}\int\limits_{[0,1]}
(R(w)+R'(w)w)\\
&\times& \rho^{\alpha-1}e^{i R(w)w x (\rho^{\beta-1}-\rho^{\alpha-1})}\cdot\frac{\Delta_{\alpha,\beta}(R(w)w)}{\Delta(R(w)w)}\,\mathfrak q(dx)dw.
\end{eqnarray*}
Lemma \ref{lemma31} and the property 1 of admissible contours give the estimate of integrand which allows to apply Lemma
\ref{lemma32}. Therefore, the integral tends to zero
as $\ell \rightarrow \infty$, and the statement follows.
\end{proof}

\section{Proof of theorem \ref{th16}}\label{Sec3}

The starting point of the proof is the same as in Theorem \ref{th15}. We use decomposition (\ref{int-n-sq}), (\ref{int-alpha-beta}). The terms with $\alpha = \beta$ vanish by the assumption
$\int\limits_{[0,1]} {\mathfrak q}(dx)=0$. Then, using Lemmata \ref{lemma31} and \ref{lemma32} we obtain for $n=2k$
\begin{align}
\nonumber
&\int\limits_{\gamma_\ell}\int\limits_{[0,1]} G_0(x,x,\lambda)\mathfrak q(dx)d\lambda\\
= -i &\int\limits_{\Gamma_\ell}\big({\cal I}_{1,k+1}+{\cal I}_{k+1,1}+{\cal I}_{k, n}+{\cal I}_{n, k}\big)\,dz+o(1)
\label{4int}
\end{align}
as $\ell\to\infty$, where
\begin{align*}
{\cal I}_{1,k+1} &= {\cal I}_{1,k+1}(z) = \int\limits_{[0,1]} e^{2iz(1-x)} \cdot\frac{\hat{\Delta}_{1,k+1}(z)}{\hat{\Delta}(z)}\, \mathfrak q(dx);\\
{\cal I}_{k+1,1} &= {\cal I}_{k+1,1}(z) = \int\limits_{[0,1]} \rho^{k}e^{2izx} \cdot\frac{\hat{\Delta}_{k+1,1}(z)}{\hat{\Delta}(z)}\, \mathfrak q(dx);
\end{align*}
\begin{align*}
{\cal I}_{k, n} &= {\cal I}_{k, n}(z) = \int\limits_{[0,1]} \rho^{k-1}e^{2iz(1-x)\rho^{k-1}} \cdot\frac{\hat{\Delta}_{k,n}(z)}{\hat{\Delta}(z)}\, \mathfrak q(dx);\\
{\cal I}_{n, k} &= {\cal I}_{n, k}(z) = \int\limits_{[0,1]} \rho^{n-1} e^{2izx\rho^{k-1}} \cdot\frac{\hat{\Delta}_{n, k}(z)}{\hat{\Delta}(z)}\, \mathfrak q(dx).
\end{align*}
Generally speaking, these four terms do not converge in the usual sense as $\ell\to\infty$, and we use the $({\cal C},1)$-passage to the limit.

We split the measure $\mathfrak q(dx)$ as follows:
\begin{align*}
    {\mathfrak q}(dx)=h_{\frac{1}{2}}\cdot\delta\big(x-\frac 12\big)+{\mathfrak q}_1(dx),
\end{align*}
so that ${\mathfrak q}_1(dx)$ has no atom at the point $\frac 12$ but in general $\int\limits_{[0,1]} {\mathfrak q}_1(dx)\ne0$.

Therefore,
\begin{align}
\nonumber
&({\cal C},1)\,\text{-}\lim\limits_{\ell\rightarrow\infty}\int\limits_{\gamma_\ell}\int\limits_{[0,1]} G_0(x,x,\lambda)\mathfrak q(dx)d\lambda
=-i {\mathfrak C}\cdot h_{\frac{1}{2}}\\
-i \cdot&({\cal C},1)\,\text{-}\lim\limits_{\ell\rightarrow\infty}\int\limits_{\Gamma_\ell}\big({\cal I}^1_{1,k+1}+{\cal I}^1_{k+1,1}+{\cal I}^1_{k, n}+{\cal I}^1_{n, k}\big)\,dz.
\label{4int+1}
\end{align}
Here we denote by ${\cal I}^1_{\alpha,\beta}$ the integrals similar to ${\cal I}_{\alpha,\beta}$ with ${\mathfrak q}_1$ instead of $\mathfrak q$, while
\begin{align}
\label{mathfrakC}
    {\mathfrak C} &= ({\cal C},1)\,\text{-}\lim\limits_{\ell\rightarrow\infty} \int\limits_{\Gamma_\ell}\big({\mathbb I}^{\delta}_{1,k+1}(z)+{\mathbb I}^{\delta}_{k+1,1}(z)
    +{\mathbb I}^{\delta}_{k, n}(z)+{\mathbb I}^{\delta}_{n, k}(z)\big)\,dz;
\end{align}
\begin{equation}\label{mathbb-I}
\aligned
{\mathbb I}^{\delta}_{1,k+1}(z) &= e^{iz} \cdot\frac{\hat{\Delta}_{1,k+1}(z)}{\hat{\Delta}(z)};
\qquad {\mathbb I}^{\delta}_{k, n}(z) = \rho^{k-1}e^{iz\rho^{k-1}} \cdot\frac{\hat{\Delta}_{k,n}(z)}{\hat{\Delta}(z)}; \\
{\mathbb I}^{\delta}_{k+1,1}(z) &= \rho^{k}e^{iz} \cdot\frac{\hat{\Delta}_{k+1,1}(z)}{\hat{\Delta}(z)}; \quad {\mathbb I}^{\delta}_{n, k}(z) = \rho^{n-1} e^{iz\rho^{k-1}}
\cdot\frac{\hat{\Delta}_{n, k}(z)}{\hat{\Delta}(z)}.
\endaligned
\end{equation}
The first term in (\ref{4int+1}) gives us the right-hand side in formula (\ref{C1=0}). Thus, we should demonstrate that the second Ces\'aro limit in (\ref{4int+1}) equals zero.
We proceed in two steps.

On the first step, we take a sequence of acceptable contours $\gamma_\ell$ such that, there is exactly one pair of eigenvalues between each two neighboring contours. Notice that it is always possible
to choose circles of radii $R_\ell^n$ as such contours (cf. Remark \ref{posled}).

\begin{lemma}\label{lemma41}
Let the complex-valued measure $\mathfrak{q}$ satisfy the assumptions of Theorem {\ref{th16}}. Consider the decomposition (\ref{4int+1}). Then for arbitrary sequence $R_\ell\rightarrow\infty$ such that circles of radii $R_\ell^n$ separate pairs
of eigenvalues, the following relation holds:
\begin{align*}
    ({\cal C},1)\,\text{-}\lim\limits_{\ell\rightarrow\infty} \int\limits_{R_\ell (\Gamma^1\cup\Gamma^2)}\!\!\big({\cal I}^1_{1,k+1}+{\cal I}^1_{k+1,1}+{\cal I}^1_{k, n}+{\cal I}^1_{n, k}\big)\,dz = 0.
\end{align*}
\end{lemma}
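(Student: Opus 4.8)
The plan is to peel off, by integration by parts in $x$, the only non-negligible contributions of $\mathfrak q_1$ --- which come from the endpoints of $[0,1]$ and carry a \emph{doubled} frequency --- and then to show that, unlike the contribution of the atom at $\tfrac12$ that produces $\mathfrak C$, these endpoint contributions integrate to zero in the Ces\`aro sense. Throughout, recall that on a circle $R\equiv R_\ell$ is constant, so $dz=R\,dw$ with $w\in\Gamma^1\cup\Gamma^2$.

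First I would integrate by parts (Riemann--Stieltjes) in each of ${\cal I}^1_{1,k+1},{\cal I}^1_{k+1,1},{\cal I}^1_{k,n},{\cal I}^1_{n,k}$, using $\mathfrak q_1(dx)=d{\cal Q}_1(x)$ with ${\cal Q}_1(0)=0$ and ${\cal Q}_1(1)=-h_{\frac12}$. In each term this isolates a boundary contribution proportional to ${\cal Q}_1(1)$ that carries the non-decaying exponential produced by the far endpoint ($e^{2iz}$ for the pair $(1,k+1),(k+1,1)$ and $e^{2iz\rho^{k-1}}$ for the pair $(k,n),(n,k)$), together with a remainder of the form $2iz\int_0^1 e^{\,\cdots}\big({\cal Q}_1(x)-{\cal Q}_1(\text{endpoint})\big)\,dx$ whose factor vanishes to first order at the relevant endpoint because ${\cal Q}'(0)={\cal Q}'(1)=0$. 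The outcome of this step is
\[
\int\limits_{\Gamma_\ell}\big({\cal I}^1_{1,k+1}+{\cal I}^1_{k+1,1}+{\cal I}^1_{k,n}+{\cal I}^1_{n,k}\big)\,dz
={\cal Q}_1(1)\int\limits_{\Gamma_\ell}G(z)\,dz+o(1),
\]
where $G$ is obtained from the integrand $\sum{\mathbb I}^{\delta}_{\alpha,\beta}$ of (\ref{mathfrakC})--(\ref{mathbb-I}) by replacing the midpoint phases $e^{iz},e^{iz\rho^{k-1}}$ with the endpoint phases $e^{2iz},e^{2iz\rho^{k-1}}$:
\[
G(z)=e^{2iz}\Big(\tfrac{\hat\Delta_{1,k+1}(z)}{\hat\Delta(z)}+\rho^{k}\tfrac{\hat\Delta_{k+1,1}(z)}{\hat\Delta(z)}\Big)
+e^{2iz\rho^{k-1}}\Big(\rho^{k-1}\tfrac{\hat\Delta_{k,n}(z)}{\hat\Delta(z)}+\rho^{n-1}\tfrac{\hat\Delta_{n,k}(z)}{\hat\Delta(z)}\Big).
\]

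Next I would dispose of the remainder integrals. Although the exponential decay here is only one-sided ($e^{-c_0R(1-x)}$ near $x=1$, and symmetrically $e^{-c_0Rx}$ near $x=0$), the local bound $|{\cal Q}_1(x)-{\cal Q}_1(1)|\le\varepsilon_R(1-x)$ that follows from ${\cal Q}'(1)=0$ and BV-regularity, combined with the exponential smallness of the phase away from the endpoint, controls the double integral exactly as in Lemma~\ref{lemma32}; the single extra factor $R$ from $dz=R\,dw$ together with the $R$ from $2iz$ reproduces the same $R^2$-bookkeeping used there, giving bounds $C\varepsilon_R+CR^2e^{-c\sqrt R}$. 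Hence each remainder is $o(1)$ as $\ell\to\infty$ and is a fortiori Ces\`aro-negligible.

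It remains to prove $({\cal C},1)\text{-}\lim_{\ell}\int_{\Gamma_\ell}G(z)\,dz=0$, and \textbf{this is the main obstacle}. Both exponentials in $G$ are exponentially small except near the critical angles $\mathrm{Arg}\,z=0$ and $\mathrm{Arg}\,z=\tfrac{2\pi}{n}$, so the integral localizes there. Near $\mathrm{Arg}\,z=0$ I would insert the Birkhoff asymptotics of $\hat\Delta$ and of the cofactors $\hat\Delta_{\alpha,\beta}$ (Appendix) to reduce the bracket multiplying $e^{2iz}$ to an explicit rational function of the borderline exponential $u=e^{iz}$, whose denominator is the quadratic Birkhoff polynomial $(u-\xi_1)(u-\xi_2)$ (a double factor $(u-\xi_1)^2$ when $\xi_1=\xi_2$), and likewise near $\mathrm{Arg}\,z=\tfrac{2\pi}{n}$ with $v=e^{iz\rho^{k-1}}$. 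Since $u=e^{iz}$ gives $dz=\tfrac{du}{iu}$, the midpoint phase turns $e^{iz}\,dz$ into $\tfrac{du}{i}$ while the endpoint phase turns $e^{2iz}\,dz$ into $\tfrac{u\,du}{i}$: thus the localized model integral for $G$ is precisely the one producing $\mathfrak C$ in (\ref{th13-log})--(\ref{th13-nolog}), but with one extra factor $u$ (resp.\ $v$) in the integrand. The computation still to be carried out is the evaluation of these rational contour integrals along the rays $u=e^{iR_\ell}e^{-s}$, $s\ge0$; I expect the extra factor $u$ to turn the logarithmic/constant terms of the $\mathfrak C$-computation into a combination that cancels between the two arcs, so that the localized value tends to $0$. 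The Ces\`aro average over $\ell$ --- indispensable because $\int_{\Gamma_\ell}G\,dz$ oscillates through $u=e^{iR_\ell}$, and in the non-strongly-regular case $\xi_1=\xi_2$ because each eigenvalue pair must first be bracketed --- then yields the claimed zero limit, with independence of the admissible choice of $\{R_\ell\}$ established exactly as for $\mathfrak C$. Verifying this cancellation is the delicate point and the place where the even-order phenomenon lives: it is the arithmetic difference between the phase of the midpoint atom ($e^{iz}$) and of the endpoints ($e^{2iz}$) that separates a nonzero $\mathfrak C$ from a vanishing endpoint contribution.
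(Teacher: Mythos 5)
Your strategy---integrate by parts in $x$ first, reducing everything to an endpoint boundary term plus a remainder---breaks down at both of its steps, and the two failures cancel each other, which is exactly why the decomposition cannot be repaired piecewise. The appeal to Lemma \ref{lemma32} for the remainder is invalid: that lemma requires $\Re\Psi(w,x)\le-c_0\min(x,1-x)$ \emph{uniformly} in $w$, and the four pairs treated in Lemma \ref{lemma41} are precisely those excluded by (\ref{alphabeta}) in Lemma \ref{lemma31} because their phases violate this condition. For the phase $e^{2izx}$ one has $\Re(2izx)=-2x|z|\sin(Arg\,z)\to0$ as $Arg\,z\to0$, so the decay degenerates at the critical angles; your absolute-value bookkeeping then produces, with $|{\cal Q}_1(x)-{\cal Q}_1(1)|\le\varepsilon_R(1-x)$ and the factor $R^2$ coming from $2iz$ and $dz=R\,dw$, only the divergent bound $C\varepsilon_R\int_0^{\pi/n}\sin^{-2}\theta\,d\theta=\infty$. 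No size estimate can close this: the remainder genuinely is \emph{not} $o(1)$, nor even Ces\`aro-null in general. Indeed, test $\mathfrak q_1=c\,\delta_{x_0}$ with $x_0\notin\{0,\tfrac12,1\}$: your remainder for the pair $(k+1,1)$ equals
\begin{equation*}
-c\,\rho^{k}\,\frac{\hat\Delta_{k+1,1}(z)}{\hat\Delta(z)}\big(e^{2iz}-e^{2izx_0}\big),
\end{equation*}
whose $e^{2izx_0}$-part is Ces\`aro-null but whose $e^{2iz}$-part converges (no averaging involved, because $e^{2iR_\ell}=e^{2iR_0}$ along $R_\ell=R_0+2\pi\ell$) to a constant that is nonzero for generic $R_0$. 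The oscillatory analysis that actually handles these resonant terms---the expansion (\ref{sumfour}), dominated convergence using the absence of atoms of $\mathfrak q_1$ at $\{0,\tfrac12,1\}$, Ces\`aro averaging for the $e^{iz}\mathbb{M}_1(e^{iz})$ piece, and BV-regularity with a localized integration by parts for the constant piece---is the bulk of the paper's proof, and your proposal has no substitute for it.

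The second step fails as well, and not merely because you left the cancellation unverified: it is false. Since $e^{2iR_\ell}=e^{2iR_0}$, the sequence $\int_{\Gamma_\ell}G(z)\,dz$ does not oscillate at all; running the vertical-segment computation of Lemma \ref{lemma51} with your doubled phase (the extra factor $e^{iz}=e^{iR_0}e^{-\tau}$ in the integrand) gives, e.g.\ in the model case $\mathfrak m_1=0$, $\rho^\varkappa=1$,
\begin{equation*}
\lim_{\ell\to\infty}\int\limits_{\Gamma_\ell}G(z)\,dz
=-\,\frac{i\,(\mathfrak m_{1,k+1}-\mathfrak m_{k+1,1})}{2\mathfrak m}\;Log\big(-e^{2iR_0}\big),
\end{equation*}
which is nonzero for generic $R_0$ whenever $\mathfrak m_{1,k+1}\ne\mathfrak m_{k+1,1}$---that is, in exactly the situation where $\mathfrak C\ne0$ (the paper's Example). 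Since your boundary term is ${\cal Q}_1(1)\int_{\Gamma_\ell}G\,dz=-h_{\frac12}\int_{\Gamma_\ell}G\,dz$ and the sum of your two pieces is the quantity the Lemma controls, the Ces\`aro limit of your remainder must equal $+h_{\frac12}\lim_\ell\int_{\Gamma_\ell}G\,dz\ne0$: the two nonzero limits cancel each other, and your decomposition merely relocates the difficulty instead of resolving it. The paper avoids this trap by never integrating the full measure by parts (so no factor $z$ appears), deforming to the vertical segment $z=R_\ell+i\tau$ as in (\ref{I_k+1_1}), and killing the five pieces of (\ref{sumfour}) one by one; in that argument the midpoint resonance enters through $e^{2izx}\cdot e^{iz}\mathbb{M}_1(e^{iz})=e^{2iz(x+1/2)}\mathbb{M}_1(e^{iz})$, not through endpoint boundary terms, and what makes the limit zero is not an algebraic cancellation between the two arcs but the absence of atoms of $\mathfrak q_1$ at the resonant points together with BV-regularity.
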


\begin{proof}
We prove that for the integral of ${\cal I}^1_{k+1,1}$, $({\cal C},1)$-limit equals zero. For other terms, the proof is quite similar. By (\ref{asymp}), we can assume without loss of generality that
$R_\ell=R_0+2\ell\pi$ for large $\ell$.

 Similarly to Lemma \ref{lemma32}, the integral over $R_\ell \Gamma^2$ tends to zero. Next, using the Cauchy residue theorem we replace the integral over the arc $R_\ell \Gamma^1$
by the integral over two segments (see Fig. \ref{fig:1})
$$
(R_\ell,0)\rightarrow(R_\ell,R_\ell\sin(\tfrac{\pi}{n}))\rightarrow(R_\ell \cos(\tfrac{\pi}{n}),R_\ell \sin(\tfrac{\pi}{n})).
$$

\begin{figure}
\setlength{\unitlength}{1cm}
\begin{picture}(0,6)(-1, -1)

\put(0, 0){\vector(1, 0){10}}
\put(0, 0){\vector(0,1){7}}
\qbezier(0,0)(7.071068,7.071068)(7.071068,7.071068)

\thicklines
\put(-0.500000,-0.500000){0}
\put(9.000000,-0.500000){$Re(z)$}
\put(-1.200000,6.656854){$Im(z)$}
\put(8.050000,0.200000){$(R, 0)$}
\put(8.050000,5.656854){$(R, R\sin(\frac{\pi}{n}))$}
\put(2.056854,5.656854){$(R \cos(\frac{\pi}{n}), R \sin(\frac{\pi}{n}))$}
\qbezier(8.000000,0)(8.000000,3.061467)(5.656854,5.656854)
{\color{red}
\qbezier(5.656854,5.656854)(8.000000,5.656854)(8.000000,5.656854)
\qbezier(8.000000,5.656854)(8.000000,5.656854)(8.000000,0.000000)}
\end{picture}
    \caption{}
    \label{fig:1}
\end{figure}

Since $R_\ell$ are separated from $|\lambda_N^0|^{\frac{1}{n}}$, the new contours are separated from $|\lambda_N^0|^{\frac{1}{n}}$ for large $\ell$. Similarly to the proof of Theorem \ref{th15}, 
we show using Lemmata \ref{lemma31} and \ref{lemma32} that the integral over the second segment also tends to zero. This gives
\begin{align}
\nonumber
&\int\limits_{R_\ell (\Gamma^1\cup\Gamma^2)}\!\!{\cal I}^1_{k+1,1}(z)\,dz\\
= -i &\int\limits_0^{R_\ell \sin(\frac{\pi}{n})}\int\limits_{[0,1]} e^{2i(R_\ell+i\tau)x} \cdot\frac{\hat{\Delta}_{k+1,1}(R_\ell+i\tau)}{\hat{\Delta}(R_\ell+i\tau)}\, \mathfrak{q}_1(dx)d\tau + o(1).
\label{I_k+1_1}
\end{align}

One can see from (\ref{frak-m}) that if $w=e^{iArg(z)}\in\Gamma^1$ then
$\hat\Delta(z)$ is a polynomial of variables $e^{iz}$ and $z^{-1}$, and its degree with respect to $e^{iz}$ equals two. 
So, if $z=R_\ell+i\tau$, $\tau\in(0,R_\ell \sin(\frac{\pi}{n}))$ then
\begin{align*}
    \hat\Delta(z) = M_1(e^{iz}) + z^{-1}M_2(e^{iz})+O(z^{-2}) \quad \mbox{as} \quad \ell\to\infty,
\end{align*}
where $M_1$ and $M_2$ are polynomials of degree two. The constant term $\mathfrak m$ in the polynomial $M_1$ does not equal zero, because conditions (\ref{Boundary_Conditions}) are Birkhoff regular.

We decompose $\hat{\Delta}_{k+1,1}(z)$ in a similar way and obtain the following relation as $\ell\to\infty$:
\begin{align}\label{sumfour}
\frac{\hat{\Delta}_{k+1,1}(z)}{\hat{\Delta}(z)} = \frac{\mathfrak{m}_{k+1,1}}{\mathfrak{m}} + e^{iz}\mathbb{M}_1(e^{iz}) + \frac{C}{z} + \frac{e^{iz}}{z}\mathbb{M}_2(e^{iz}) + O(z^{-2}),
\end{align}
where $\mathbb{M}_1$ and $\mathbb{M}_2$ are proper rational functions, and their denominators are polynomials with non-zero constant terms.

We split the integral (\ref{I_k+1_1}) into a sum corresponding to decomposition (\ref{sumfour}) and estimate these integrals one by one:
\begin{align*}
\int\limits_{R_\ell (\Gamma^1\cup\Gamma^2)}\!\!{\cal I}^1_{k+1,1}(z)\,dz = J_1(\ell) + J_2(\ell) + J_3(\ell) + J_4(\ell) + J_5(\ell).
\end{align*}
It is evident that $J_5(\ell) = o(1)$ as $\ell \rightarrow \infty$.

We start with the fourth term:
\begin{eqnarray*}
|J_4(\ell)| = \bigg|\int\limits_0^{R_\ell \sin(\frac{\pi}{n})}\!\int\limits_{[0,1]} e^{2i(R_\ell+i\tau)x}\frac{e^{iR_\ell-\tau}}{R_\ell +i\tau}\mathbb{M}_2\left(e^{iR_\ell-\tau}\right)
\mathfrak{q}_1(dx)d\tau\bigg| \\
\leq C\cdot\bigg|\!\int\limits_0^{R_\ell \sin(\frac{\pi}{n})}\!\!\|\mathfrak{q}_1\|\, \frac{e^{-\tau}}{R_\ell}\, d\tau\bigg| = o(1)\quad \mbox{as} \quad \ell\to\infty.
\end{eqnarray*}

To estimate the third integral
\begin{align*}
i J_3(\ell) = \int\limits_0^{R_\ell \sin(\frac{\pi}{n})}\! \int\limits_{[0,1]} e^{2i(R_\ell+i\tau)x} \frac{C}{R_\ell+i\tau}\, \mathfrak{q}_1(dx)d\tau,
\end{align*}
we observe that the function
\begin{align*}
\int\limits_0^{R_\ell \sin(\frac{\pi}{n})}  e^{2iR_\ell x} \frac{e^{-2\tau x}}{R_\ell+i\tau}\, d\tau
\end{align*}
is uniformly bounded and converges to zero as $\ell\to\infty$ for all $x\in(0,1]$. Since ${\mathfrak q}_1$ has no atom at zero, the integral tends to zero by the Lebesgue dominated convergence theorem.

Next, we transform the second integral as follows:
\begin{align*}
iJ_2(\ell)= \int\limits_0^{R_\ell\sin(\frac{\pi}{n})}\!\int\limits_{[0,1]} e^{2i(R_\ell+i\tau)(1/2+x)} \mathbb{M}_1\left(e^{iR_\ell-\tau}\right)\, \mathfrak{q}_1(dx)d\tau \\
=\int\limits_{[0,1]} F(x)e^{2iR_\ell(1/2+x)} \mathfrak{q}_1(dx)+o(1),
\end{align*}
where
$$
F(x) = \int\limits_0^{\infty} \mathbb{M}_1(e^{i(R_0 + i\tau)})e^{-2\tau(1/2+x)}d\tau,
$$
because $\mathbb{M}_1\left(e^{iR_\ell-\tau}\right)=\mathbb{M}_1\left(e^{iR_0-\tau}\right)$ (recall that $R_\ell=R_0+2\ell\pi$) and
\begin{align*}
\bigg|\int\limits_{R_\ell \sin(\frac{\pi}{n})}^{\infty}\int\limits_{[0,1]}\! e^{2i(R_\ell+i\tau)(1/2+x)} \mathbb{M}_1(e^{i(R_0+i\tau)}) \mathfrak{q}_1(dx)d\tau\bigg| \leq \!
\int\limits_{R_\ell \sin(\frac{\pi}{n})}^{\infty}\!\!\!\|\mathfrak{q}_1\| e^{-\tau} d\tau = o(1).
\end{align*}

Now we are in position to apply the Ces\'aro method:
\begin{align*}
&({\cal C},1)\,\text{-}\lim_{\ell\rightarrow\infty}\int\limits_{[0,1]} F(x)e^{2iR_\ell(1/2+x)} \mathfrak{q}_1(dx)\\
=& \lim\limits_{\ell\rightarrow\infty}\int\limits_{[0,1]} F(x)\,\frac{e^{2 i R_0(1/2+x)}}{\ell}\frac{1-e^{4\pi i \ell (1/2+x)}}{1-e^{4\pi i (1/2+x)}}\,\mathfrak{q}_1(dx).
\end{align*}
 It is easy to see that $F(x)$ is a continuous bounded function. Therefore, the last integrand is uniformly bounded and converges to zero as $\ell\to\infty$ for all $x\not\in\{0, \frac 12, 1\}$.
 Since $\mathfrak{q}_1$ has no atoms at these points, the integral tends to zero by the Lebesgue theorem.

To deal with the remaining integral $J_1$, we recall that $\mathfrak{q}_1$ is BV-regular. Thus, $\mathfrak{q}_1(dx) = \psi_0(x)dx+x \cdot d\psi_0(x)$ and $\psi_0$ has bounded variation at zero.
This gives
\begin{align*}
iJ_1(\ell)=& \int\limits_0^{R_\ell \sin(\frac{\pi}{n})}\int\limits_0^1 \psi_0(x) e^{2 i R_\ell x - 2\tau x }
\frac{\mathfrak{m}_{k+1,1}}{\mathfrak{m}} \,dxd\tau +\,\frac{1}{2} \int\limits_0^1 e^{2 i R_\ell x}\frac{\mathfrak{m}_{k+1,1}}{\mathfrak{m}} \,d\psi_0(x)\\
-& \,\frac{1}{2} \int\limits_0^1 e^{2 R_\ell x(i-\sin(\frac{\pi}{n}))}\frac{\mathfrak{m}_{k+1,1}}{\mathfrak{m}} \,d\psi_0(x)=: J_{11}(\ell)+J_{12}(\ell)+J_{13}(\ell).
\end{align*}
We integrate $J_{11}(\ell)$ by parts with respect to $x$. The boundary term at $0$ vanishes due to $\psi_0(0+)={\cal Q}'(0)=0$, and we obtain
\begin{align}
\nonumber
J_{11}(\ell)=&- \int\limits_0^{R_\ell \sin(\frac{\pi}{n})}\int\limits_0^1 e^{2 i R_\ell x}\frac{e^{- 2\tau x }}{2 i R_\ell - 2 \tau}
\frac{\mathfrak{m}_{k+1,1}}{\mathfrak{m}} \,d\psi_0(x)d\tau \\+ 
& \int\limits_0^{R_\ell \sin(\frac{\pi}{n})} e^{2 i R_\ell }\frac{\psi_0(1)e^{- 2\tau }}{2 i R_\ell - 2 \tau}
\frac{\mathfrak{m}_{k+1,1}}{\mathfrak{m}}\, d\tau.
\label{J11}
\end{align}
The function
\begin{align*}
\int\limits_0^{R_\ell \sin(\frac{\pi}{n})}\frac{e^{- 2\tau x }}{2 i R_\ell - 2 \tau} d\tau
\end{align*}
is uniformly bounded and converges to zero as $R_\ell\to\infty$ for all $x\in(0,1]$. The measure $d\psi_0(x)$ has no atom at zero (this follows from $\psi_0(0+)=0$),
and the first term in (\ref{J11}) tends to zero by the Lebesgue theorem. The second term is evidently $O(\frac 1{R_\ell})$.

For $J_{12}$ we again use the Ces\'aro method:
\begin{align*}
({\cal C},1)\,\text{-}\lim\limits_{\ell\rightarrow\infty} J_{12}(\ell)=
\lim\limits_{\ell\rightarrow\infty}\int\limits_0^1 \frac{e^{2 i R_0x}}{2\ell}\frac{1-e^{4\pi i \ell x}}{1-e^{4\pi i x}}\frac{\mathfrak{m}_{k+1,1}}{\mathfrak{m}} \,d\psi_0(x).
\end{align*}
The integrand here is uniformly bounded and converges to zero as $\ell\to\infty$ for all $x\not\in\{0, \frac 12, 1\}$. The measure $d\psi_0(x)$ has no atoms at these points,
and the limit equals zero by the Lebesgue theorem. In a similar way we have $({\cal C},1)\,\text{-}\lim\limits_{\ell\rightarrow\infty} J_{13}(\ell)=0$.
\end{proof}


For the case of non-strongly regular boundary conditions, a sequence of circles described in Lemma \ref{lemma41} is chosen in accordance to Remark~\ref{posled}. Therefore, in this case
relation (\ref{4int+1}) and Lemma \ref{lemma41} prove the assertion of Theorem~\ref{th16}.

In the case of strongly regular boundary conditions we need the second step. We differ two subcases.

Let  $\alpha_1 \neq \overline{\alpha}_2$ in the relation (\ref{asymp}). Then, by Remark \ref{posled}, we can choose a sequence of circles of radii $R_\ell^n$ as acceptable contours separating
eigenvalues for large $\ell$. We split the sequence $R_\ell$ into two parts, $R_{2\ell}$ and $R_{2\ell-1}$, and notice that every of these subsequences satisfy the assumptions of Lemma \ref{lemma41}.
Therefore, we have
\begin{align*}
& ({\cal C},1)\,\text{-}\lim\limits_{\ell\rightarrow\infty} \int\limits_{R_{2\ell} (\Gamma^1\cup\Gamma^2)}\!\!\big({\cal I}^1_{1,k+1}+{\cal I}^1_{k+1,1}+{\cal I}^1_{k, n}+{\cal I}^1_{n, k}\big)\,dz \\
=\, &({\cal C},1)\,\text{-}\lim\limits_{\ell\rightarrow\infty} \int\limits_{R_{2\ell-1} (\Gamma^1\cup\Gamma^2)}\!\!\big({\cal I}^1_{1,k+1}+{\cal I}^1_{k+1,1}+{\cal I}^1_{k, n}+{\cal I}^1_{n, k}\big)\,dz= 0,
\end{align*}
that implies the assertion in this subcase in view of trivial relation
\begin{equation}\label{polusumma}
({\cal C},1)\,\text{-}\lim\limits_{\ell\rightarrow\infty}I_\ell=
\frac 12\big(({\cal
C},1)\,\text{-}\lim\limits_{\ell\rightarrow\infty}I_{2\ell}
+({\cal
C},1)\,\text{-}\lim\limits_{\ell\rightarrow\infty}I_{2\ell-1}\big).
\end{equation}

Now let $\alpha_1 = \overline{\alpha}_2$, $\Im(\alpha_1)>0$. Then a sequence of circles described in Lemma \ref{lemma41} can be chosen as a subsequence of acceptable contours separating
eigenvalues for large $\ell$, either $\{\gamma_{2\ell}\}$ or $\{\gamma_{2\ell-1}\}$. To be definite, let these circles be $\{\gamma_{2\ell}\}$. We use the following statement.

\begin{proposition}[\cite{NG}, Lemma 4.1]\label{proposition41}
Let $\lim\limits_{\ell\to\infty}\frac{I_\ell-I_{\ell-1}}{\ell}=0$. Then
\begin{equation}\label{odd-even}
\aligned
({\cal C},1)\text{-}\lim\limits_{\ell\rightarrow\infty}I_\ell &=&& ({\cal C},1)\text{-}\lim\limits_{\ell\rightarrow\infty} I_{2\ell}-\frac 12\cdot({\cal C},1)\text{-}\lim\limits_{\ell\to \infty}
(I_{2\ell}-I_{2\ell-1})
\\
&=&& ({\cal C},1)\text{-}\lim\limits_{\ell\rightarrow\infty} I_{2\ell-1}-\frac 12\cdot({\cal C},1)\text{-}\lim_{\ell\to \infty} (I_{2\ell+1}-I_{2\ell}),
\endaligned
\end{equation}
i.e. if one of the expressions in the right-hand side of (\ref{odd-even}) converges then the sequence in the left-hand side converges.
\end{proposition}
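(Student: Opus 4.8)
The plan is to reduce the statement to elementary identities for the Cesàro means of the full sequence and of its two parity subsequences, and then to invoke the Tauberian hypothesis at the single point where it is really needed. First I would fix notation: let $\sigma_m=\frac1m\sum_{\ell=1}^m I_\ell$ denote the $(C,1)$-means of the full sequence, so that $({\cal C},1)\text{-}\lim I_\ell=\lim_m\sigma_m$, and let $A_k=\frac1k\sum_{\ell=1}^k I_{2\ell}$ and $B_k=\frac1k\sum_{\ell=1}^k I_{2\ell-1}$ be the $(C,1)$-means of the even- and odd-indexed subsequences, so that $({\cal C},1)\text{-}\lim I_{2\ell}=\lim_k A_k$ and $({\cal C},1)\text{-}\lim I_{2\ell-1}=\lim_k B_k$. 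Splitting the first $2k$ terms according to parity gives the basic identity $\sigma_{2k}=\frac12(A_k+B_k)$, while $\frac1k\sum_{\ell=1}^k(I_{2\ell}-I_{2\ell-1})=A_k-B_k$ identifies $({\cal C},1)\text{-}\lim(I_{2\ell}-I_{2\ell-1})=\lim_k(A_k-B_k)$.

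Next I would observe that convergence of the first right-hand expression in (\ref{odd-even}) already pins down the even-indexed means. Indeed, if $A:=({\cal C},1)\text{-}\lim I_{2\ell}$ and $D:=({\cal C},1)\text{-}\lim(I_{2\ell}-I_{2\ell-1})$ both exist, then $B_k=A_k-(A_k-B_k)\to A-D$, whence $\lim_k\sigma_{2k}=\frac12\big(A+(A-D)\big)=A-\frac D2$, which is precisely the asserted value. Thus the only substantive task is to pass from convergence of the even-indexed means $\sigma_{2k}$ to convergence of all the means, i.e. to show $\lim_m\sigma_m=\lim_k\sigma_{2k}$.

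This is where the Tauberian hypothesis enters, since $(C,1)$-summability by itself gives no bound on the individual terms $I_m$. Writing $\sigma_{2k+1}=\frac{2k}{2k+1}\sigma_{2k}+\frac{I_{2k+1}}{2k+1}$, the task reduces to proving $\frac{I_m}{m}\to0$. I would establish this by combining two facts: since $\sigma_{2k}$ and $\sigma_{2k-2}$ tend to the same limit, $\frac{I_{2k-1}+I_{2k}}{2k}=\sigma_{2k}-(1-\frac1k)\sigma_{2k-2}\to0$; and the hypothesis gives $\frac{I_{2k}-I_{2k-1}}{2k}\to0$. Adding these yields $\frac{I_{2k}}{2k}\to0$, and then $\frac{I_{2k+1}}{2k+1}=\frac{I_{2k}+(I_{2k+1}-I_{2k})}{2k+1}\to0$ by the hypothesis once more. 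Hence $\sigma_{2k+1}$ and $\sigma_{2k}$ share the same limit, so $\sigma_m$ converges to it, which proves the first equality in (\ref{odd-even}).

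Finally, the second equality follows by the symmetric argument with the roles of the two parities interchanged: convergence of the second right-hand expression means that $B:=({\cal C},1)\text{-}\lim I_{2\ell-1}$ and $E:=({\cal C},1)\text{-}\lim(I_{2\ell+1}-I_{2\ell})$ exist, and the computation $\frac1k\sum_{\ell=1}^k(I_{2\ell+1}-I_{2\ell})=\frac{k+1}{k}B_{k+1}-\frac{I_1}{k}-A_k\to B-A$ shows $E=B-A$; hence again both subsequence means converge, $\lim_k\sigma_{2k}=\frac{A+B}2=B-\frac E2$, and the same parity-bridging step gives $\lim_m\sigma_m=B-\frac E2$. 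I expect the bridging step $\frac{I_m}{m}\to0$ to be the crux: it is the only place the Tauberian growth control $\frac{I_\ell-I_{\ell-1}}{\ell}\to0$ is used, and it is exactly what guarantees that convergence of the even-indexed means forces convergence of all the means.
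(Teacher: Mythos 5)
Your proof is correct. Note, however, that the paper itself contains no argument for this statement: Proposition \ref{proposition41} is imported verbatim from \cite{NG} (Lemma 4.1 there), so there is no in-paper proof to compare against, and a self-contained verification like yours is exactly what a reader would need. Your route is the natural elementary one, and each step checks out: the parity decomposition $\sigma_{2k}=\frac12(A_k+B_k)$ together with $\frac1k\sum_{\ell=1}^k(I_{2\ell}-I_{2\ell-1})=A_k-B_k$ correctly reduces the first equality in (\ref{odd-even}) to showing $\lim_m\sigma_m=\lim_k\sigma_{2k}$; the identification $E=B-A$ for the second equality, including the harmless boundary term $\frac{I_1}{k}$ and the shift $\frac{k+1}{k}B_{k+1}$, is also right. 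The crux is indeed the Tauberian bridge: from $\sigma_{2k}\to L$ you get $\frac{I_{2k-1}+I_{2k}}{2k}=\sigma_{2k}-\bigl(1-\frac1k\bigr)\sigma_{2k-2}\to 0$, and adding the hypothesis at $\ell=2k$ gives $\frac{I_{2k}}{2k}\to 0$, then the hypothesis at $\ell=2k+1$ gives $\frac{I_{2k+1}}{2k+1}\to 0$, so $\sigma_{2k+1}=\frac{2k}{2k+1}\sigma_{2k}+\frac{I_{2k+1}}{2k+1}\to L$. This is precisely the one place the growth condition $\frac{I_\ell-I_{\ell-1}}{\ell}\to 0$ is indispensable (without it the conclusion fails), and your proof isolates it cleanly.
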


Thus, if we consider contours $g_{2\ell}$ enclosing $(\lambda_{2\ell}({\mathfrak{q}}))^{\frac 1n}$ and $(\lambda_{2\ell})^{\frac 1n}$ and prove that
\begin{align}\label{contur-g}
({\cal C},1)\,\text{-}\lim\limits_{\ell\rightarrow\infty}\int\limits_{g_{2\ell}}\big({\cal I}^1_{1,k+1}+{\cal I}^1_{k+1,1}+{\cal I}^1_{k, n}+{\cal I}^1_{n, k}\big)\,dz=0,
\end{align}
then the statement of theorem follows from relation (\ref{4int+1}), Lemma \ref{lemma41} and Proposition \ref{proposition41}.

The relation (\ref{asymp}) and the Cauchy residue theorem allow us to choose $g_{2\ell}$ for sufficiently large $\ell$ as the unions of two arcs and two small segments:
$$
 g_{2\ell}=(\mathfrak{r}_\ell+\varepsilon)\Gamma^1 \cup [(\mathfrak{r}_\ell+\varepsilon)e^{i\frac{\pi}n},(\mathfrak{r}_\ell-\varepsilon)e^{i\frac{\pi}n}]
 \cup(\mathfrak{r}_\ell-\varepsilon)\Gamma^1 \cup [\mathfrak{r}_\ell-\varepsilon,\mathfrak{r}_\ell+\varepsilon],
$$
 where $\mathfrak{r}_\ell=\Re(\alpha_1)+2\pi\ell$ and $\varepsilon$ is arbitrary small positive given number.

Given $\varepsilon$, the Ces\`aro limits of integrals over both arcs equal zero by Lemma \ref{lemma41}. Since the segments are separated from $(\lambda_{2\ell})^{\frac 1n}$
uniformly with respect to $\varepsilon$, the absolute values of corresponding integrals does not exceed $C\varepsilon$. Since $\varepsilon$ is arbitrary small, the relation (\ref{contur-g}) follows.
This completes the proof of Theorem \ref{th16}.

\section{On the value of the coefficient $\mathfrak C$}\label{Sec4}

\begin{proof}[Proof of Theorem \ref{th13}]
To calculate the limit in (\ref{mathfrakC}) we proceed in two
steps similarly to the proof of Theorem \ref{th16}. On the first
step, we take a sequence of circles such that there is exactly one
pair of eigenvalues between each two neighboring circles.

\begin{lemma}\label{lemma51}
For arbitrary sequence $R_\ell=R_0+2\pi\ell$ such that circles of radii $R_\ell^n$ separate pairs
of eigenvalues for sufficiently large $\ell$, the following relation holds:
\begin{align*}
\lim\limits_{\ell\rightarrow\infty} &\int\limits_{R_\ell (\Gamma^1\cup\Gamma^2)}\big({\mathbb I}^{\delta}_{1,k+1}(z)+{\mathbb I}^{\delta}_{k+1,1}(z)
+{\mathbb I}^{\delta}_{k, n}(z)+{\mathbb I}^{\delta}_{n, k}(z)\big)\,dz \\
=\ &\mathfrak c \int\limits_{0}^{\infty}\frac{e^{-iR_0}\,dt}
{(t-e^{-iR_0}\xi_1)(t-e^{-iR_0}\xi_2)}.
\end{align*}
Here $\xi_1$ and $\xi_2$ are roots of the Birkhoff polynomial,
while
\begin{align}\label{frak-c}
\mathfrak c = \frac{\mathfrak{m}_{1,k+1}-\mathfrak{m}_{k+1,1}}{i \rho^{\varkappa}\mathfrak{m}}
\end{align}
(the determinants $\mathfrak{m}$, $\mathfrak{m}_{1,k+1}$, $\mathfrak{m}_{k+1,1}$ are introduced in (\ref{frak-m}) and (\ref{frak-m-k+1})).
\end{lemma}

\begin{proof}
Using formulae (\ref{mathbb-I}) we obtain, as $\ell\to\infty$,
\begin{align*}
&\int\limits_{R_\ell (\Gamma^1\cup\Gamma^2)}\big({\mathbb I}^{\delta}_{1,k+1}(z)+{\mathbb I}^{\delta}_{k+1,1}(z)
+{\mathbb I}^{\delta}_{k, n}(z)+{\mathbb I}^{\delta}_{n, k}(z)\big)\,dz \\
=&\int\limits_{R_\ell \Gamma^1}\!\!\big({\mathbb I}^{\delta}_{1,k+1}(z)+{\mathbb I}^{\delta}_{k+1,1}(z)\big)\,dz
+ \int\limits_{R_\ell \Gamma^2}\!\!\big({\mathbb I}^{\delta}_{k, n}(z)+{\mathbb I}^{\delta}_{n, k}(z)\big)\,dz +o(1).
\end{align*}

Using relations (\ref{frak-m})--(\ref{frak-num}) proved in Appendix, we rewrite the integrands explicitly:
\begin{align*}
    {\mathbb I}^{\delta}_{1,k+1}(z)+{\mathbb I}^{\delta}_{k+1,1}(z) = \frac{e^{i z} \big(\langle \mathfrak{m}_{1,k+1}\rangle-\langle\mathfrak{m}_{k+1,1}\rangle\big)}{\langle\mathfrak{m}\rangle  + \langle\mathfrak{m}_1\rangle  e^{i z} -\rho^{\varkappa}
    \langle\mathfrak{m}\rangle  e^{2 i z}}, \qquad z \in R_\ell\Gamma^1;
\end{align*}
\begin{align*}
    {\mathbb I}^{\delta}_{k,n}(z)+{\mathbb I}^{\delta}_{n,k}(z) = &\ \frac{ \rho^{k-1}e^{iz\rho^{k-1}}( \langle\mathfrak{m}_{k,n}\rangle  - \langle\mathfrak{m}_{n, k}\rangle )}{\langle\mathfrak{m}\rangle 
    - \rho^{-\varkappa}\langle\mathfrak{m}_1\rangle  e^{iz \rho^{k-1}} -\rho^{-\varkappa} \langle\mathfrak{m}\rangle  e^{2 iz \rho^{k-1}}}\\
    = &\ \frac{e^{i \tilde{z}} (\langle\mathfrak{m}_{k+1,1}\rangle 
    -\langle\mathfrak{m}_{1,k+1}\rangle )}{\langle\mathfrak{m}\rangle  + \langle\mathfrak{m}_1\rangle  e^{i\tilde{z}} -\rho^{\varkappa}\langle\mathfrak{m}\rangle  e^{2i\tilde{z}}}\,\rho^{-1} , \qquad z \in
    R_\ell\Gamma^2,
\end{align*}
here $\tilde{z} = \rho ^ {-1} z$. Thus,
\begin{align*}
   & \int\limits_{R_\ell \Gamma^1}\!\!\big({\mathbb I}^{\delta}_{1,k+1}(z)+{\mathbb I}^{\delta}_{k+1,1}(z)\big)\,dz + \int\limits_{R_\ell \Gamma^2}\!\!\big({\mathbb I}^{\delta}_{k, n}(z)+{\mathbb I}^{\delta}_{n, k}(z)\big)\,dz \\
   =& \int\limits_{R_\ell (\rho^{-1}\Gamma^2\cup\Gamma^1)}\!\!\frac{e^{i z} (\langle\mathfrak{m}_{1,k+1}\rangle -\langle\mathfrak{m}_{k+1,1}\rangle )}{\langle\mathfrak{m}\rangle  + \langle\mathfrak{m}_1\rangle  e^{i z} -\rho^{\varkappa}
    \langle\mathfrak{m}\rangle  e^{2 i z}}\,dz=:{\mathbb J}(\ell).
\end{align*}
Using the Cauchy residue theorem we replace the integral over the
arc by the integral over three segments
\begin{align*}
(R_\ell \cos(\tfrac{\pi}{n}), -R_\ell \sin(\tfrac{\pi}{n}))
\rightarrow & \ (R_\ell,-R_\ell\sin(\tfrac{\pi}{n}))\\
\rightarrow & \ (R_\ell,R_\ell\sin(\tfrac{\pi}{n}))\rightarrow
(R_\ell\cos(\tfrac{\pi}{n}),R_\ell \sin(\tfrac{\pi}{n})).
\end{align*}

\begin{figure}

\setlength{\unitlength}{1cm}
\begin{picture}(0, 10)(-1, -5)
\put(0, 0){\vector(1, 0){10}}
\put(0, 0){\vector(0,1){4}}
\qbezier(0,0)(9.238795,3.826834)(9.238795,3.826834)
\qbezier(0,0)(9.238795,-3.826834)(9.238795,-3.826834)
\qbezier(0,0)(0.000000,-4.000000)(0.000000,-4.000000)

\thicklines
\put(-0.500000,-0.250000){0}
\put(9.000000,-0.500000){$Re(z)$}
\put(-1.200000,3.561467){$Im(z)$}
\put(8.100000,2.961467){$(R, R \sin(\frac{\pi}{n}))$}
\put(8.100000,-3.061467){$(R, -R \sin(\frac{\pi}{n}))$}
\put(3.791036,3.161467){$(R \cos(\frac{\pi}{n}), R \sin(\frac{\pi}{n}))$}
\put(3.791036,-3.561467){$(R \cos(\frac{\pi}{n}), -R \sin(\frac{\pi}{n}))$}
\qbezier(8.000000,0)(8.000000,1.560723)(7.391036,3.061467)
\qbezier(8.000000,0)(8.000000,-1.560723)(7.391036,-3.061467)
{\color{red}
\qbezier(8.000000,3.061467)(8.000000,-3.061467)(8.000000,-3.061467)
\qbezier(7.391036,-3.061467)(8.000000,-3.061467)(8.000000,-3.061467)
\qbezier(7.391036,3.061467)(8.000000,3.061467)(8.000000,3.061467)}
\end{picture}
    \caption{}
    \label{fig:2}
\end{figure}

\bigskip
\medskip
Similarly to the proof of Lemma \ref{lemma41}, integrals over the
first and the third segments tend to zero, and we obtain
\begin{align*}
   {\mathbb J}(\ell) = &\ i\int\limits_{-R_{\ell} \sin{\frac{\pi}{n}}}^{R_{\ell} \sin{\frac{\pi}{n}}}
   \frac{e^{i R_0 - \tau}(\langle\mathfrak{m}_{1,k+1}\rangle -\langle\mathfrak{m}_{k+1,1}\rangle )}{\langle\mathfrak{m}\rangle 
   + \langle\mathfrak{m}_1\rangle  e^{i R_0 - \tau} -\rho^{\varkappa} \langle\mathfrak{m}\rangle  e^{2 i R_0 - 2\tau}}\,d\tau + o(1) \\
    = &\ i\int\limits_{-\infty}^{\infty} \bigg[\,\frac{e^{i R_0 - \tau}(\mathfrak{m}_{1,k+1} -\mathfrak{m}_{k+1,1})}
    {\mathfrak{m}+ \mathfrak{m}_1 e^{i R_0 - \tau} -\rho^{\varkappa} \mathfrak{m} e^{2 i R_0 - 2\tau}}
    + e^{-|\tau|}\cdot O\big(\tfrac 1{R_\ell}\big)\bigg]\,d\tau + o(1).
\end{align*}
We make the change of variable $t=e^{-\tau}$ and recall that the
denominator is the Birkhoff polynomial of $e^{i R_0 - \tau}$. This
gives
\begin{align*}
   {\mathbb J}(\ell) = \frac{\mathfrak{m}_{1,k+1}-\mathfrak{m}_{k+1,1}}{i\rho^{\varkappa}\mathfrak{m}}\int\limits_{0}^{\infty} \frac{e^{-i R_0}\,dt}
    {(t-e^{-iR_0}\xi_1)(t-e^{-iR_0}\xi_2)}  + o(1),
\end{align*}
and the statement follows.
\end{proof}

We continue the proof of Theorem \ref{th13}. For the case of
non-strongly regular boundary conditions, a sequence of circles
described in Lemma \ref{lemma51} is chosen in accordance to
Remark~\ref{posled}. Therefore, in this case relation
(\ref{mathfrakC}) and Lemma \ref{lemma51} give (\ref{th13-nolog})
after explicit integration.

In the case of strongly regular boundary conditions we need the
second step. Let $\alpha_1 \neq \overline{\alpha}_2$ in the
relation (\ref{asymp}). Then, by Remark \ref{posled}, we can
choose a sequence of circles of radii $R_\ell^n$ as acceptable
contours separating eigenvalues for large $\ell$. We split the
sequence $R_\ell$ into two parts, $R_{2\ell}=R_0+2\pi\ell$ and
$R_{2\ell-1}=R_1+2\pi\ell$, and apply Lemma \ref{lemma51} for
these subsequences. After integration we obtain (\ref{th13-log})
using relation (\ref{polusumma}).

Using the residue theorem we can see that the resulting formula is
continuous with respect to $\xi_1$ and $\xi_2$ if $\xi_1\ne\xi_2$.
This covers the subcase $\alpha_1 =
\overline{\alpha}_2\notin\mathbb R$ and completes the proof.
\end{proof}

\begin{remark}
    If the boundary conditions are almost separated (that is, $b_j = 0$ if $j < n/2$ and $a_j = 0$ if $j \ge n/2$), then it is known that $\xi_1=-\xi_2$
(see e.g. \cite{NN}). Therefore, Theorem \ref{th13} gives
$\mathfrak{C}=0$.

If the boundary conditions are quasi-periodic (that is,
$a_j=\vartheta b_j$ ($\vartheta\ne0$) and $d_j=j$ for
$j=0,1,\dots, n-1$), then
$\mathfrak{m}_{1,k+1}=\mathfrak{m}_{k+1,1}=0$. Therefore, we also
have $\mathfrak{C}=0$.

Now let $n=2$. Then in the almost separated case $\mathfrak{C}=0$
as above. Otherwise $d_0=0$, $d_1=1$, and from relation
$\mathfrak{m}_{1,k+1}=(-1)^{d_0+d_1+1}\mathfrak{m}_{k+1,1}$ we
obtain $\mathfrak{C}=0$ again. This fact is in concordance with
results of \cite{NG}.
\end{remark}

Thus, the constant $\mathfrak{C}$ vanishes for many important
classes of operators. However, the following example shows that
$\mathfrak{C}$ can be non-zero even for a self-adjoint operator
$\mathbb L$.\medskip

{\bf Example}. We consider a fourth-order operator generated by a
differential expression ${\cal L}=D^4$ and the system of boundary
conditions:
\begin{equation*}
\left\{
\begin{array}{lr}
  y(0)+y(1)=0,\\
  y'(1)=0,\\
  y''(0)=0,\\
  y'''(0)+y'''(1)=0.
\end{array}
\right.
\end{equation*}
Direct calculation gives 
$$ \mathfrak{C} = \frac{4\sqrt{5}}{9}\cdot\tan^{-1}(4 \sqrt{5}).
$$

\section{Appendix}

Following \cite{SZN}, we define $\Delta(z) = \det({\cal W}(z))$, where
\begin{align*}
    {\cal W}_{jk}(z) = P_{j-1}(i z \rho ^{k-1}) + e^{i z \rho^{k-1}}Q_{j-1}(i z \rho ^{k-1}), \qquad j, k =1, \dots, n.
\end{align*}
Then we denote by $\Delta_{\alpha, \beta}(z)$ the determinant of a matrix that coincides with ${\cal W}(z)$ except the column $\beta$ that is replaced by $\alpha$-th column from matrix \begin{align*}
    {\cal V}_{jk}(z) = e^{i z \rho^{k-1}}Q_{j-1}(i z \rho ^{k-1}), \qquad j, k =1, \dots, n.
\end{align*}






We recall that the function $\nu(w)$ is introduced in (\ref{nu}).

\begin{proposition}[see the proof of Lemma 1 in \cite{SZN}]
\label{proposition21}
Let $\alpha\neq\beta$, $\nu=\nu(w)$, $R=|z|$, $w=e^{i Arg(z)}$ (recall that $Arg(z) \in (0, \frac{2\pi}{n}$)). Then we have as $R\to\infty$
\begin{eqnarray*}
\frac{\Delta_{\alpha,\beta}(R w)}{\Delta(R w)} &=& e^{i R w\rho^{\alpha-1}}\cdot\frac{\hat\Delta_{\alpha\beta}(Rw)}{\hat\Delta(Rw) }\, (1+O(e^{iRw\rho})) \quad\textit{if}\quad\alpha,\beta \leq \nu,\\
\frac{\Delta_{\alpha,\beta}(R w)}{\Delta(R w)} &=& e^{i R w(\rho^{\alpha-1}-\rho^{\beta-1})}\cdot\frac{\hat\Delta_{\alpha\beta}(Rw)}{\hat\Delta(Rw) }\, (1+O(e^{iRw\rho}))
\quad\textit{if}\quad\alpha \leq \nu < \beta,\\
\frac{\Delta_{\alpha,\beta}(R w)}{\Delta(R w)} &=& e^{-i R w\rho^{\beta-1}}\cdot\frac{\hat\Delta_{\alpha\beta}(Rw)}{\hat\Delta(Rw) }\, (1+O(e^{iRw\rho})) \quad\textit{if}\quad\alpha,\beta > \nu,\\
\frac{\Delta_{\alpha,\beta}(R w)}{\Delta(R w)} &=& \frac{\hat\Delta_{\alpha\beta}(Rw)}{\hat\Delta(Rw) }\, (1+O(e^{iRw\rho}))\quad \textit{if}\quad\alpha > \nu \geq \beta.\\
\end{eqnarray*}
Here $\hat{\Delta}_{\alpha,\beta}(z)$ and $\hat{\Delta}(z)$ are determinants depending on boundary conditions and satisfying the following properties:
\begin{enumerate}
 \item For large $|z|$, $\hat{\Delta}_{\alpha,\beta}(z)$ are bounded uniformly;

 \item $\hat{\Delta}(R(w)w)$ is separated from zero on arbitrary acceptable sequence of contours $\gamma_\ell$ uniformly w.r.t. $\ell$.
\end{enumerate}
\end{proposition}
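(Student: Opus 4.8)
The plan is to read off both asymptotic relations from the classical Birkhoff-type analysis of the determinants $\Delta(z)=\det(\mathcal W(z))$ and $\Delta_{\alpha,\beta}(z)$, keeping careful track of which columns grow and which decay as $R=|z|\to\infty$. Fix $w=e^{iArg(z)}$ in the closed sector $\overline{\Gamma^1\cup\Gamma^2}$ and write $\theta_k=Arg(z)+\tfrac{2\pi(k-1)}{n}$, so that $|e^{iz\rho^{k-1}}|=e^{-R\sin\theta_k}$. First I would observe that the $k$-th column of $\mathcal W$ is dominated by its polynomial part $P_{j-1}(iz\rho^{k-1})$ when $\sin\theta_k>0$ (decaying exponential) and by its exponential part $e^{iz\rho^{k-1}}Q_{j-1}(iz\rho^{k-1})$ when $\sin\theta_k<0$ (growing exponential). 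The index separating the two families is precisely $\nu=\nu(w)$ from (\ref{nu}): columns $k\le\nu$ are decaying and columns $k>\nu$ are growing. Pulling out the dominant exponential from every growing column and normalizing each row by the power of $z$ dictated by the degrees $d_j$, I would write
\[
\Delta(z)=\Big(\prod_{k>\nu}e^{iz\rho^{k-1}}\Big)\,\hat{\Delta}(z)\,\big(1+O(e^{iRw\rho})\big),
\]
where $\hat{\Delta}(z)$ is the determinant of the matrix whose $k$-th column carries $P_{j-1}(iz\rho^{k-1})$ for $k\le\nu$ and $Q_{j-1}(iz\rho^{k-1})$ for $k>\nu$. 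After the row normalization $\hat{\Delta}(z)$ is a bounded function (its leading part is the Birkhoff determinant built from the $a_j,b_j$, with $O(z^{-1})$ corrections), and the error $O(e^{iRw\rho})$ collects the next-largest subdominant exponentials, which are uniformly smaller than the retained ones throughout the sector. The separation of $\hat{\Delta}$ from zero on acceptable contours is exactly the content of Birkhoff regularity, giving property~2.

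Next I would treat $\Delta_{\alpha,\beta}(z)$, which differs from $\Delta(z)$ only in that column $\beta$ is replaced by the $\alpha$-th column $\big(e^{iz\rho^{\alpha-1}}Q_{j-1}(iz\rho^{\alpha-1})\big)_j$ of $\mathcal V$. The crucial manipulation is a single column operation: since column $\alpha$ of $\mathcal W$ is still present (because $\alpha\ne\beta$) and equals $P_{j-1}(iz\rho^{\alpha-1})+e^{iz\rho^{\alpha-1}}Q_{j-1}(iz\rho^{\alpha-1})$, subtracting the new column $\beta$ from column $\alpha$ leaves $\big(P_{j-1}(iz\rho^{\alpha-1})\big)_j$ there. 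After this reduction the determinant again has the clean ``one dominant factor per growing column'' structure, and the four cases correspond exactly to the four positions of $\alpha,\beta$ relative to $\nu$. Comparing the product of dominant exponentials of $\Delta_{\alpha,\beta}$ with that of $\Delta$ then yields the stated prefactors: $e^{iz\rho^{\alpha-1}}$ when $\alpha,\beta\le\nu$; $e^{iz(\rho^{\alpha-1}-\rho^{\beta-1})}$ when $\alpha\le\nu<\beta$; $e^{-iz\rho^{\beta-1}}$ when $\alpha,\beta>\nu$ (here the column operation is essential, since column $\alpha$ loses its growth while column $\beta$ acquires the factor $e^{iz\rho^{\alpha-1}}$, so the overall exponential product drops by $e^{iz\rho^{\beta-1}}$); and the trivial prefactor $1$ when $\alpha>\nu\ge\beta$. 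In each case the bounded remaining determinant is what I would define to be $\hat{\Delta}_{\alpha,\beta}(z)$, which is property~1.

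Finally, to make all these estimates uniform in $w$ over the closed sector I would majorize every subdominant contribution by the single factor $O(e^{iRw\rho})$ and note that $\hat{\Delta},\hat{\Delta}_{\alpha,\beta}$ depend continuously on $w$. The main obstacle is uniformity near the boundary rays $Arg(z)\in\{0,\tfrac{\pi}{n},\tfrac{2\pi}{n}\}$ and near the exceptional indices isolated in the proof of Lemma~\ref{lemma31}: at these directions some $\sin\theta_k$ vanishes, two columns become comparable in modulus, and the clean decaying/growing dichotomy degenerates. I would dispose of this by checking that for odd $n$ only the three degeneracies $m=1,\tfrac{n-1}{2},n$ occur and for even $n$ only the four degeneracies $m=1,k,k+1,n$ occur, exactly as recorded in Lemma~\ref{lemma31}, and that along each such borderline direction the two competing exponentials have equal modulus while the factored representation above stays valid with the same $\hat{\Delta}$. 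The non-vanishing of $\hat{\Delta}$ guaranteed by Birkhoff regularity then keeps all the ratios $\Delta_{\alpha,\beta}/\Delta$ bounded uniformly up to the boundary, completing the proof.
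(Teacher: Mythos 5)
Your column-operation trick (subtracting the replaced column $\beta$ from column $\alpha$ so that column $\alpha$ retains only $P_{j-1}(iz\rho^{\alpha-1})$) is sound, and it does produce the four exponential prefactors exactly as stated; that bookkeeping matches the mechanism behind the cited proof. The genuine gap lies in your construction of $\hat\Delta$ and in both claimed properties. You define $\hat\Delta$ by keeping only $P_{j-1}(iz\rho^{k-1})$ in columns $k\le\nu$ and only $Q_{j-1}(iz\rho^{k-1})$ in columns $k>\nu$, pushing everything else into a multiplicative error $1+O(e^{iRw\rho})$. This is not uniformly valid on the closed sector: in the borderline columns (for even $n$, columns $1$ and $k+1$ on $\overline{\Gamma^1}$, columns $k$ and $n$ on $\overline{\Gamma^2}$) the discarded exponential, e.g. $e^{iz}$ as $Arg(z)\to 0$, has modulus tending to $1$, so it is comparable to the retained polynomial part and cannot be absorbed into an exponentially small relative error. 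This is precisely why the correct hatted determinant, written out in (\ref{func-det}) and (\ref{frak-m}), keeps the full two-term entries $\langle a_j\rangle+\langle b_j\rangle e^{iz}$ and $\langle a_j\rangle e^{iz}+\langle b_j\rangle$ in those two columns, so that $\hat\Delta$ is asymptotically a quadratic polynomial in $e^{iz}$ (the Birkhoff polynomial) — the structure on which the rest of the paper relies in (\ref{sumfour}) and Lemmas \ref{lemma41}, \ref{lemma51}.

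The defect is not cosmetic. Your $\hat\Delta$ tends uniformly to the nonzero constant $\mathfrak m$ (nonzero by regularity), so your factorization would force $\Delta$ to be zero-free near the rays $Arg(z)\in\{0,\tfrac{2\pi}{n}\}$ for large $R$ — but these rays are exactly where all zeros of $\Delta$, i.e. the points $(\lambda_N^0)^{\frac1n}$, accumulate (Remark \ref{posled}). For the same reason your concluding claims fail: the ratio $\Delta_{\alpha,\beta}/\Delta$ is not ``bounded uniformly up to the boundary'' (it blows up at the eigenvalues), and property 2 is not ``exactly the content of Birkhoff regularity.'' With the correct $\hat\Delta$, separation from zero holds only along acceptable contours and uses condition 2 of their definition (uniform separation from the $(\lambda_N^0)^{\frac1n}$) in addition to regularity; regularity controls the coefficients of the Birkhoff polynomial, not the distance of a given contour from its roots. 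To repair the argument you must keep the borderline columns intact inside $\hat\Delta$ (as in the Appendix) and then deduce property 2 from acceptability together with regularity, rather than from regularity alone.
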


The properties listed above are sufficient to prove Lemma \ref{lemma31} and Theorem \ref{th15}. In the proofs of Theorem \ref{th16} and Theorem \ref{th13}, we need explicit formulae for $\hat{\Delta}(z)$ and for those $\hat{\Delta}_{\alpha,\beta}(z)$ which are not covered by Lemma~\ref{lemma31}. 

Let $n=2k$ and recall that $w=e^{iArg(z)}$. We have 
\begin{equation}
\label{func-det}
\hat{\Delta}(z)=\det\big[{\cal A}_1|{\cal B}_1\big], \quad w\in \Gamma^1; \qquad
\hat{\Delta}(z)=\det\big[{\cal A}_2|{\cal B}_2\big], \quad w\in \Gamma^2,
\end{equation}
where
\begin{eqnarray*}
{\cal A}_1 &=&
\begin{bmatrix}
    \rho^{0\cdot d_0}\big(\langle a_{0}\rangle +\langle b_{0}\rangle e^{iz}\big) & \rho^{1\cdot d_0}\langle a_{0}\rangle  &  \vdots & \rho^{(k-1)d_0}\langle a_{0}\rangle   \\
    \vdots & \vdots & \vdots & \vdots \\
    \rho^{0\cdot d_{n-1}}\big(\langle a_{n-1}\rangle +\langle b_{n-1}\rangle e^{iz}\big) &  \rho^{1\cdot d_{n-1}}\langle a_{n-1}\rangle  & \vdots & \rho^{(k-1)d_{n-1}}\langle a_{n-1}\rangle  
\end{bmatrix},\\
\\
{\cal B}_1 &=&
\begin{bmatrix}
    \rho^{k d_0}\big(\langle a_0\rangle  e^{iz} + \langle b_{0}\rangle \big) & \rho^{(k+1) d_0}\langle b_{0}\rangle  & \vdots & \rho^{(n-1)d_0}\langle b_{0}\rangle \\
     \vdots &  \vdots & \vdots & \vdots \\
     \rho^{k d_{n-1}}\big(\langle a_{n-1}\rangle  e^{iz} + \langle b_{n-1}\rangle \big) & \rho^{(k+1) d_{n-1}}\langle b_{n-1}\rangle  & \vdots & \rho^{(n-1) d_{n-1}}\langle b_{n-1}\rangle  
\end{bmatrix};
\end{eqnarray*}
\begin{eqnarray*}
{\cal A}_2 \!\!\!&=&\!\!\!\!
\begin{bmatrix}
    \rho^{0
    }\langle a_{0}\rangle  & \vdots & \rho^{(k-2) d_0}\langle a_{0}\rangle  & \rho^{(k-1) d_0}\big(\langle a_{0}\rangle +\langle b_{0}\rangle e^{iz \rho^{k-1}}\big) \\
    \vdots & \vdots & \vdots & \vdots \\
    \rho^{0
    }\langle a_{n-1}\rangle  & \vdots & \rho^{(k-2) d_{n-1}}\langle a_{n-1}\rangle  & \rho^{(k-1) d_{n-1}}\big(\langle a_{n-1}\rangle +\langle b_{n-1}\rangle e^{iz \rho^{k-1}}\big) 
\end{bmatrix},\\
\\
{\cal B}_2 \!\!\!&=&\!\!\!\!
\begin{bmatrix}
    \rho^{k d_0}\langle b_{0}\rangle  & \vdots & \rho^{(n-2) d_0}\langle b_{0}\rangle  & \rho^{(n-1)d_0}\big(\langle a_0\rangle  e^{iz \rho^{k-1}} + \langle b_{0}\rangle \big)\\
    \vdots & \vdots & \vdots & \vdots \\
    \rho^{k d_{n-1}}\langle b_{n-1}\rangle  & \vdots & \rho^{(n-2) d_{n-1}}\langle b_{n-1}\rangle  & \rho^{(n-1) d_{n-1}}\big(\langle a_{n-1}\rangle e^{iz \rho^{k-1}}+\langle b_{n-1}\rangle \big) 
\end{bmatrix}
\end{eqnarray*}
(we recall that $\langle a\rangle $ stands for a polynomial of $z^{-1}$ with the constant term $a$).

Next, for $w \in \Gamma^1$ we have
\begin{equation}
\label{func-det-k+1-1}
\hat{\Delta}_{1,k+1}(z)=\det\big[{\cal A}_{1,k+1}|{\cal B}_{1,k+1}\big]; \qquad
\hat{\Delta}_{k+1,1}(z)=\det\big[{\cal A}_{k+1,1}|{\cal B}_{k+1,1}\big],
\end{equation}
where
\begin{eqnarray*}
{\cal A}_{1,k+1} &=&
\begin{bmatrix}
    \rho^{0\cdot d_0}\langle a_{0}\rangle  &  \vdots & \rho^{(k-1) d_0}\langle a_{0}\rangle  \\
    \vdots & \vdots & \vdots  \\
    \rho^{0\cdot d_{n-1}}\langle a_{n-1}\rangle  &  \vdots & \rho^{(k-1)d_{n-1}}\langle a_{n-1}\rangle  
\end{bmatrix};\\
{\cal B}_{1,k+1} &=&
\begin{bmatrix}
    \rho^{0 \cdot d_0}\langle b_{0}\rangle  & \rho^{(k+1) d_0}\langle b_{0}\rangle  & \vdots & \rho^{(n-1) d_0}\langle b_{0}\rangle \\
    \vdots & \vdots & \vdots & \vdots \\
    \rho^{0 \cdot d_{n-1}}\langle b_{n-1}\rangle  & \rho^{(k+1) d_{n-1}}\langle b_{n-1}\rangle  & \vdots & \rho^{(n-1)d_{n-1}}\langle b_{n-1}\rangle 
\end{bmatrix}; \\
\end{eqnarray*}
\begin{eqnarray*}
{\cal A}_{k+1,1} &=&
\begin{bmatrix}
    \rho^{k d_0}\langle b_{0}\rangle  & \rho^{1\cdot d_0}\langle a_{0}\rangle  & \vdots & \rho^{(k-1)d_0}\langle a_{0}\rangle \\
    \vdots & \vdots & \vdots & \vdots \\
    \rho^{k d_{n-1}}\langle b_{n-1}\rangle  & \rho^{1\cdot d_{n-1}}\langle a_{n-1}\rangle  & \vdots & \rho^{(k-1)d_{n-1}}\langle a_{n-1}\rangle 
\end{bmatrix};\\
{\cal B}_{k+1,1} &=&
\begin{bmatrix}
    \rho^{k d_0}\langle a_{0}\rangle  & \rho^{(k+1) d_0}\langle b_{0}\rangle  & \vdots & \rho^{(n-1)d_0}\langle b_{0}\rangle \\
    \vdots & \vdots & \vdots & \vdots \\
    \rho^{k d_{n-1}}\langle a_{n-1}\rangle  & \rho^{(k+1) d_{n-1}}\langle b_{n-1}\rangle  & \vdots & \rho^{(n-1)d_{n-1}}\langle b_{n-1}\rangle 
\end{bmatrix}.\\
\end{eqnarray*}

Similarly, for $w \in \Gamma^2$ we have
\begin{equation}
\label{func-det-n-k}
\hat{\Delta}_{k,n}(z)=\det\big[{\cal A}_{k,n}|{\cal B}_{k,n}\big]; \qquad
\hat{\Delta}_{n,k}(z)=\det\big[{\cal A}_{n,k}|{\cal B}_{n,k}\big],
\end{equation}
where
\begin{eqnarray*}
{\cal A}_{k,n} &=&
\begin{bmatrix}
\rho^{0\cdot d_0}\langle a_{0}\rangle  &  \vdots & \rho^{(k-1)d_0}\langle a_{0}\rangle  \\
    \vdots & \vdots & \vdots \\
    \rho^{0\cdot d_{n-1}}\langle a_{n-1}\rangle  &  \vdots & \rho^{(k-1)d_{n-1}}\langle a_{n-1}\rangle  \\
\end{bmatrix};\\
{\cal B}_{k,n} &=&
\begin{bmatrix}
\rho^{k d_0}\langle b_{0}\rangle  & \vdots & \rho^{(n-2)d_0}\langle b_{0}\rangle  & \rho^{(k-1)d_0}\langle b_{0}\rangle \\
 \vdots & \vdots & \vdots & \vdots \\
\rho^{k d_{n-1}}\langle b_{n-1}\rangle  & \vdots & \rho^{(n-2)d_{n-1}}\langle b_{n-1}\rangle  & \rho^{(k-1)d_{n-1}}\langle b_{n-1}\rangle  \\
\end{bmatrix}; \\
\end{eqnarray*}
\begin{eqnarray*}
{\cal A}_{n,k} &=&
\begin{bmatrix}
    \rho^{0 \cdot d_0}\langle a_{0}\rangle  & \vdots  & \rho^{(k-2) d_0}\langle a_{0}\rangle & \rho^{(n-1)d_0}\langle b_{0}\rangle \\
    \vdots & \vdots & \vdots & \vdots\\
    \rho^{0 \cdot d_{n-1}}\langle a_{n-1}\rangle  &  \vdots & \rho^{(k-2) d_{n-1}}\langle a_{n-1}\rangle  & \rho^{(n-1)d_{n-1}}\langle b_{n-1}\rangle \\
\end{bmatrix};\\
{\cal B}_{n,k} &=&
\begin{bmatrix}
    \rho^{k d_0}\langle b_{0}\rangle  & \vdots & \rho^{(n-2)d_0}\langle b_{0}\rangle  & \rho^{(n-1)d_0}\langle a_{0}\rangle \\
    \vdots & \vdots & \vdots & \vdots\\
    \rho^{k d_{n-1}}\langle b_{n-1}\rangle  & \vdots & \rho^{(n-2)d_{n-1}}\langle b_{n-1}\rangle  &
    \rho^{(n-1)d_{n-1}}\langle a_{n-1}\rangle  \\
\end{bmatrix}.\\
\end{eqnarray*}

We expand $\hat{\Delta}(z)$ in exponentials and obtain
 \begin{align}\label{frak-m}
 \hat{\Delta}(z) &=\langle \mathfrak{m}_2\rangle  e^{2 i z} + \langle \mathfrak{m}_1\rangle  e^{i z} + \langle \mathfrak{m}\rangle , &
 w \in \Gamma^1;\\
 \hat{\Delta}(z)&=\langle \mathfrak{m}'_2\rangle  e^{2 iz \rho^{k-1}} + \langle \mathfrak{m}'_1\rangle  e^{iz \rho^{k-1}} + \langle \mathfrak{m}\rangle ,
 &
 w \in \Gamma^2
 \nonumber
 \end{align}
(the constant terms evidently coincide and do not vanish by regularity of boundary conditions (\ref{Boundary_Conditions})).

Taking the common multiplier $\rho^{d_j}$ over from the $j$-th row in the determinant $\mathfrak{m}_2$ (see the proof of \cite[Theorem 1.1]{Naz09}) we obtain
$\mathfrak{m}_2=-\rho^\varkappa \mathfrak{m}$. In the same way, $\mathfrak{m}'_2=-\rho^{-\varkappa} \mathfrak{m}$ and
$\mathfrak{m}'_1=-\rho^{-\varkappa} \mathfrak{m}_1$.

Finally, we have
\begin{align}\label{frak-m-k+1}
 \hat{\Delta}_{1,k+1}(z) = \langle \mathfrak{m}_{1,k+1}\rangle ,& \qquad \hat{\Delta}_{k+1,1}(z) = \langle \mathfrak{m}_{k+1,1}\rangle ,
 & w \in \Gamma^1;\\
 \hat{\Delta}_{k,n}(z) = \langle \mathfrak{m}_{k,n}\rangle ,& \qquad
 \hat{\Delta}_{n,k}(z) = \langle \mathfrak{m}_{n,k}\rangle , & w \in \Gamma^2,
 \nonumber
 \end{align}
and a similar calculation gives 
\begin{align}\label{frak-num}
\mathfrak{m}_{k,n}=-\rho^{-\varkappa}\mathfrak{m}_{k+1,1}, \qquad \mathfrak{m}_{n,k}=-\rho^{-\varkappa}\mathfrak{m}_{1,k+1}.
\end{align}

\subsection*{Acknowledgements}
We are grateful to A.V. Badanin for valuable comments.
This work was supported by the Russian Science
Foundation, grant no. 17-11-01003.

\end{document}